\definecolor{ghcolor}{RGB}{0, 150, 200} 
\definecolor{winestain}{rgb}{0.5,0,0}
\newtheorem{theorem}[subsubsection]{Theorem}
\newtheorem{thm}[subsubsection]{Theorem}
\newtheorem{lemma}[subsubsection]{Lemma}
\newtheorem{prop}[subsubsection]{Proposition}
\theoremstyle{definition}
\newtheorem{defn}[subsubsection]{Definition}
\newtheorem{definition}[subsubsection]{Definition}
\newtheorem{example}[subsubsection]{Example}
\newtheorem{convention}[subsubsection]{Convention}
\theoremstyle{remark}
\newtheorem{remark}[subsubsection]{Remark}
\newtheorem{rem}[subsubsection]{Remark}
\numberwithin{equation}{subsection}
\def\numequation{\addtocounter{subsubsection}{1}\begin{equation}}
\def\nummultline{\addtocounter{subsubsection}{1}\begin{multline}}
\def \m {\mathfrak M}
 \def \E{\mathcal E}
\def \Z {\mathbb Z}
\def \inj {\hookrightarrow }
\def \to {\rightarrow}
\def \onto {\twoheadrightarrow}
\def \spec \text{spec}
\def \cont \text{cont}
 \def \M{\mathfrak M}
\def \GL {\t{GL}}
\def \L {\mathfrak L}
\DeclareMathOperator{\gal}{Gal}
\def \Q {\mathbb Q}
\def \t {\textnormal}
\def \Z {\mathbb Z}
\def \O {\mathcal O}
\def \BF {\mathbb F}
\def \gs {\mathfrak S}
\def \ur {\t{ur}}
\def \m {\mathfrak M}
\def \Ker {\textnormal{Ker}}
\def \ku {k \llbracket u\rrbracket}
\def \Fr {\t{Fr}}
\def \acris {{A_{\t{cris}}}}
\def \sfi {\t{Mod}_{\gs}^{\varphi, r}}
\def \st {\textnormal{st}}
\def \< {\left <}
\def \> {\right >}
\def \hM {{\hat \M}}
\def \gr{\textnormal{gr}}
\def \ue {{\underline{\varepsilon}}}
\def \Md {{\rm M}_d}
\def \fe {{\mathfrak e}}
\def \e {{\mathfrak e}}
\def \Zp { \mathbb Z_p}
\def \Qp { \mathbb Q_p}
\def \Res {\textnormal{Res}}
\def \Fp  {\BF_p}
\def \FrR {\t{Fr} R}
\def \sto {\twoheadrightarrow}
\DeclareMathOperator{\Gal}{Gal}
\DeclareMathOperator{\Ind}{Ind}
\newcommand*{\wt}[1]{\widetilde{#1}}
\begin{document}

\title{Wach models and overconvergence of \'etale $(\varphi, \Gamma)$-modules}
\date{\today}
\author{Hui Gao}
\address{Department of Mathematics and Statistics, University of Helsinki, FI-00014, Finland}
\email{hui.gao@helsinki.fi}
\thanks{The author is partially supported by a postdoctoral position funded by Academy of Finland through Kari Vilonen.}

\subjclass{Primary  14F30,14L05}

\keywords{Overconvergence, Wach modules}

\begin{abstract}
A classical result of Cherbonnier and Colmez says that all \'etale $(\varphi, \Gamma)$-modules are overconvergent. In this paper, we give another proof of this fact when the base field $K$ is a finite extension of $\mathbb Q_p$.
Furthermore, we obtain an explicit (``uniform") lower bound for the overconvergence radius, which was previously not known.
The method is similar to that in a previous joint paper with Tong Liu. Namely, we study Wach models (when $K$ is unramified) in modulo $p^n$ Galois representations, and
use them to build an overconvergence basis.
\end{abstract}


\maketitle

\tableofcontents

\section{Introduction}

\subsection{The classical theorem of Cherbonnier-Colmez} \label{subsub K}
Let us first set up some notations.
Let  $p$ be a prime. Let $k$ be a perfect field of characteristic $p$, $W(k)$ its ring of Witt vectors, $K_0:=W(k)[1/p]$, and $K/K_0$ a totally ramified finite extension with $e$ the ramification index. We fix an algebraic closure $\overline {K}$ of $K$ and set $G_K:=\Gal(\overline{K}/K)$.
Define $\mu _n \in \overline K$ inductively such that $\mu_1$ is a primitive $p$-th root of unity and $(\mu_{n+1})^p = \mu_n$, and let $K_{p^\infty}:=  \cup _{n=1}^\infty K(\mu_{n})$. Let $H_K := \gal (\overline K / K_{p^\infty})$, and $\Gamma_K: =\gal(K_{p^\infty}/K)$.

Let $T$ be a finite free $\Zp$-representation of $G_K$ of rank $d$.
In \cite{fo4}, for each such $T$ is associated an \'etale $(\varphi, \Gamma)$-module $\underline{\hat{M}}(T)$ where
$$\underline{\hat{M}}(T) :=  (\O _{\widehat \E ^\ur} \otimes_{\Z_p} T) ^{H_K}.$$
See \S \ref{sec phi gamma} for any unfamiliar terms and more details. Here $ \O_{\widehat \E^\ur}$ is a certain ``period ring".
For the $\Zp$-representation $T$, we can also define its ``overconvergent periods" via:
$$ \underline{\hat{M}}^{\dagger, r} (T): =(\O _{\widehat \E ^\ur}^{\dagger, r} \otimes_{\Z_p} V) ^{H_K}.$$
where $r \in \mathbb R^{>0}$, and $\O _{\widehat \E ^\ur}^{\dagger, r}$ is the ``overconvergent period ring".
We say that $\hat M(T)$ is \emph{overconvergent} if it can be recovered by its ``overconvergent periods", i.e., if for some $r \in \mathbb R^{>0}$, we have
\begin{equation} \label{eq def}
\underline{\hat{M}}(T) = \O_\E  \otimes_{\O_\E ^{\dag, r}}   \underline{\hat{M}}^{\dagger, r} (T).
\end{equation}
The main theorem of \cite{Colmez-overcon} is the following:


\begin{thm}[\cite{Colmez-overcon}]\label{thm intro OC}
For any finite free $\Z_p$-representation $T$ of $G_K$, its associated $(\varphi, \Gamma)$-module is overconvergent.
\end{thm}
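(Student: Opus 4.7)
The plan is to follow the route indicated in the abstract: reduce to the case of unramified base $K = K_0$, construct at each modulo $p^n$ level a Wach-type model of $T/p^n T$ inside its \'etale $(\varphi,\Gamma_{K_0})$-module modulo $p^n$, and then assemble these into an inverse limit whose basis exhibits overconvergence with an explicit, uniform radius.

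\emph{Reduction to $K = K_0$.} Starting from a finite free $\Zp$-representation $T$ of $G_K$, I would pass to the induced representation $T' := \Ind_{G_K}^{G_{K_0}} T$, a finite free $\Zp$-representation of $G_{K_0}$. Since $T$ is a direct summand of $T'|_{G_K}$, overconvergence for $T'$ over $G_{K_0}$ together with a standard change of rings along the finite extension $K/K_0$ gives overconvergence for $T$ with a radius depending only on $K$, $p$, and the rank. From now on I assume $K = K_0$, so that the Wach-module ring $\mathbf{A}_{K_0}^+ := W(k)[[\upi]] \subset \O_\E$ is at hand.

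\emph{Mod-$p^n$ Wach models and their limit.} For a crystalline $\Zp$-representation of $G_{K_0}$ there is a canonical $\mathbf{A}_{K_0}^+$-lattice in $\underline{\hat M}(T)$, stable under $\varphi$ and $\Gamma_{K_0}$, on which $\Gamma_{K_0}$ acts trivially modulo a fixed power of $\upi$. For an arbitrary $T$ I would produce instead, at each level $n \geq 1$, a free $(W(k)/p^n)[[\upi]]$-submodule $\N_n \subset \underline{\hat M}(T)/p^n$, stable under $\varphi$ and $\Gamma_{K_0}$, spanning $\underline{\hat M}(T)/p^n$ over $\O_\E/p^n$, together with an integer $s$ \emph{independent of $n$} such that $\Gamma_{K_0}$ acts trivially on $\N_n / \upi^s \N_n$. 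Arranging compatibility $\N_{n+1} \otimes \Z/p^n \cong \N_n$, the inverse limit $\N := \varprojlim_n \N_n$ becomes an honest $\mathbf{A}_{K_0}^+$-lattice inside $\underline{\hat M}(T)$. A basis of $\N$ then automatically lies in $\O_\E^{\dagger, r}$ for every $r$ below an explicit threshold depending only on $p$; the uniform $\Gamma_{K_0}$-triviality modulo $\upi^s$ forces it to land in $\underline{\hat M}^{\dagger, r}(T)$, and the spanning property combined with Nakayama yields the equality \eqref{eq def}. The explicit overconvergence radius is read off directly from the pair $(p, s)$.

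\emph{Main obstacle.} The non-routine step is constructing the $\N_n$, and especially their compatibility in $n$, for an arbitrary (not necessarily crystalline) $T$: outside the crystalline world there is no functorial candidate to imitate the classical Wach module. I expect a proof by induction on $n$, lifting $\N_n$ to $\N_{n+1}$ via a successive-approximation argument that simultaneously adjusts the $\varphi$-structure and the action of a topological generator of $\Gamma_{K_0}$, in the spirit of the joint construction with Tong Liu in the $(\varphi,\tau)$-setting. The most delicate point will be ensuring that the integer $s$ governing the $\upi$-adic depth of the $\Gamma_{K_0}$-action does \emph{not} drift with $n$; any such drift would collapse the uniform lower bound on the overconvergence radius that is the main novelty of the theorem.
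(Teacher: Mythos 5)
Your reduction to $K=K_0$ by induction and Mackey decomposition matches the paper (Lemma \ref{lem unram}, citing \cite[Thm.~II.3.2(i)]{Colmez-overcon}), and the high-level plan — build mod-$p^n$ Wach-type models and assemble an overconvergent basis — is the right theme. But the mechanism you propose for the mod-$p^n$ step has a fundamental obstruction, and it is worth being precise about why.

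You ask for free $(W(k)/p^n)[\![u]\!]$-modules $\N_n\subset \underline{\hat M}(T)/p^n$, stable under $\varphi$ and $\Gamma$, with $\Gamma$ trivial mod $u^s$ for an $s$ independent of $n$, and crucially compatible in the strong sense $\N_{n+1}\otimes\Z/p^n\simeq\N_n$, so that $\N:=\varprojlim\N_n$ is an honest finite free $\gs$-lattice in $\underline{\hat M}(T)$ stable under $\varphi$ and $\Gamma$. Such a lattice cannot exist for a general $T$: its existence would say precisely that $T$ is of finite height in the sense of Wach and Colmez, and by Theorem \ref{thm: WachBerger} (Wach/Colmez/Berger, Kisin--Ren) this is, up to twist, equivalent to $T$ being crystalline. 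Your construction, if it went through, would therefore prove that every $\Zp$-representation of $G_{K_0}$ is crystalline, which is false. The ``main obstacle'' you flag at the end is in fact not an obstacle to be overcome: any correct argument must allow the $u$-depth to drift with $n$, and the insistence on a fixed $s$ is exactly where the plan breaks.

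What the paper does instead is to accept and \emph{quantify} the drift. The maximal liftable Wach models $\M_{(n)}\subset M_n$ are compatible in the weaker direction $\M_{(j)}[p^i]=\M_{(i)}$ (Lemma \ref{lem list}), but they are \emph{not} compatible under reduction mod $p^{n-1}$; there is no inverse limit $\gs$-lattice. The key technical input is Proposition \ref{prop-key}, which bounds the drift by a constant: $u^h\M_{(n-1,1)}\subset\M_{(n,1)}$ with $h=3fd^2p^{fd}$ depending only on $p,f,d$. From this one builds (Theorem \ref{thm OC basis}) not a $\gs$-basis of any lattice, but an $\O_\E$-basis $(e_j)$ of $\underline{\hat M}(T)$ obtained by normalizing the generators of $\M_{(n)}$ by a matrix $Y_n$ whose $u$-denominators grow \emph{linearly} in $n$. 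The resulting matrices $A_n$ for $\varphi$ and $B_n$ for $\gamma$ then have entries of the shape $p^{\ge n-1}/u^{O((n-1)h)}$, i.e.\ they lie in $\gs[\![p/u^\beta]\!]$ — this ring is overconvergent (Lemma \ref{lem shrink ring}), and linear $u$-denominator growth paired with $p^n$-torsion is exactly what overconvergence tolerates. In particular, there is no uniform triviality of $\Gamma$ mod $u^s$: the $B_n$ genuinely have $u$-poles of order $\sim (n-1)(4h+3ph)$. You should replace the ``no drift'' requirement by a ``linearly bounded drift'' requirement, prove the analogue of Proposition \ref{prop-key} via loose crystalline liftings, and then carry out the normalization and convergence argument in $\gs[\![p/v]\!]$.
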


\begin{rem}
The theorem was also reproved (and generalized to family version) by Berger and Colmez \cite{BC08}. There is also a relatively more direct proof by Kedlaya \cite{Kednew}.
\end{rem}

\subsection{A reproof when $K/\Qp$ is finite}
In this paper, we give another proof of Theorem \ref{thm intro OC} when $K$ is a finite extension of $\mathbb Q_p$. Furthermore, we obtain an explicit ``uniform" lower bound (depending only on $p, f, e, d$ where $f:=[k: \Fp]$) on the overconvergence radius, which was previously not known.

\begin{thm} \label{thm1}
Suppose $K/\Qp$ is a finite extension.
Let $T$ be a finite free $\Zp$-representation of $G_K$ of rank $d$.
Then $\underline{\hat{M}}(T)$ is overconvergent on the interval
\begin{equation} \label{eqeq radius1}
(0, \frac{1}{21pfe^2d^2p^{fed}}],
\end{equation}
i.e., \eqref{eq def} holds for $r= {1}/{(21pfe^2d^2p^{fed})}$ (see \S \ref{subsec OC} for our conventions).
\end{thm}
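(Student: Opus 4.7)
The plan is to follow the scheme announced in the abstract: reduce to the case where $K/\Qp$ is unramified, then for each $n \geq 1$ build a ``Wach model'' of the mod $p^n$ representation $T_n := T/p^n T$ over the ring $\mathbf A_{K_0}^+$, and finally extract from these models an explicit overconvergent basis of $\underline{\hat M}(T)$.

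First I would reduce to the case $K = K_0$. Since $G_K \subset G_{K_0}$ is of index $e$, one may view $T$ as a $\Zp$-representation of $G_{K_0}$ of rank $d' := ed$, via induction or equivalently via restriction of coefficients through the finite étale extension $\O_{\E, K}/\O_{\E, K_0}$. Because the overconvergent period rings $\O_\E^{\dagger, r}$ are functorial in the base field, overconvergence of the $(\varphi, \Gamma_{K_0})$-module attached to the induced representation on an interval $(0, r]$ forces overconvergence of $\underline{\hat M}(T)$ on the same interval. Plugging $d' = ed$ into an unramified bound of the shape $1/(21 p f (d')^2 p^{fd'})$ then reproduces exactly the radius in \eqref{eqeq radius1}.

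In the unramified case, the core step is to construct, for every $n$, a Wach model $\mathfrak N_n \subset \underline{\hat M}(T_n)$: a $\mathbf A_{K_0}^+/p^n$-submodule which is stable under $\varphi$ and $\Gamma_{K_0}$, generates $\underline{\hat M}(T_n)$ after inverting the uniformiser $\pi$, and has $\varphi$-height (the smallest power of $\pi$ annihilating the cokernel of $1 \otimes \varphi$) bounded by an explicit function of $p, f, d'$ which is \emph{independent of $n$}. Following the strategy of the joint paper with Liu, one builds $\mathfrak N_n$ by starting from any $\Gamma_{K_0}$-stable lattice in $\underline{\hat M}(T_n)$ and iteratively improving its $\varphi$-structure using the explicit action of $\Gamma_{K_0}$ on $\pi$ and successive approximations modulo $p$. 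The main obstacle is precisely to keep every estimate uniform in $n$, so that the reductions $\mathfrak N_{n+1}/p^n \twoheadrightarrow \mathfrak N_n$ are compatible and the limit $\mathfrak N := \varprojlim_n \mathfrak N_n$ inherits the uniform height bound.

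Finally, since $\mathbf A_{K_0}^+ \hookrightarrow \O_\E^{\dagger, r}$ for every $r > 0$ in a $\varphi$-compatible manner, the bounded-height property of $\mathfrak N$ forces any $\mathbf A_{K_0}^+$-basis of $\mathfrak N$ to be an overconvergent basis of the $(\varphi, \Gamma_{K_0})$-module of the induced representation on an explicit interval $(0, r_0]$ depending only on $p, f, d'$; carrying the constants back through the reduction of the second paragraph yields $r_0 = 1/(21 pfe^2 d^2 p^{fed})$. Beyond the uniform-in-$n$ issue, the delicate part will be extracting the precise numerical constant $21 p$ from the quantitative comparison between the valuations of $\pi$, of $\varphi(\pi)$, and of $\gamma(\pi)-\pi$ for $\gamma \in \Gamma_{K_0}$, which must be tight enough for a basis of $\mathfrak N$ to actually land in $\O_\E^{\dagger, r_0}$ rather than in some $\O_\E^{\dagger, r}$ with worse $r$.
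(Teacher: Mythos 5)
Your high-level outline (reduce to $K=K_0$, construct mod-$p^n$ Wach models, extract an overconvergent basis) matches the paper, but two of your intermediate steps are genuinely off and would not lead to a proof.

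First, the source of the Wach models. You propose to ``start from any $\Gamma_{K_0}$-stable lattice in $\underline{\hat M}(T_n)$ and iteratively improve its $\varphi$-structure.'' The paper does nothing of this kind: the models come from \emph{loose crystalline lifts}. For each $n$ one uses the theorem of Gao--Liu to produce a surjection $L \twoheadrightarrow T_n$ with $L$ a lattice in a crystalline representation; pushing forward the Wach module of $L$ gives a liftable Wach $\varphi$-model of $M_n$, and one takes the \emph{maximal} such model $\M_{(n)}$. The crystalline-lifting input is essential and non-elementary (it is where the hypothesis $[K:\Qp]<\infty$ is used, and where the explicit constant $s=p^{fd}+p-2$ enters); an iterative improvement of $\varphi$-matrices with no crystalline input has no reason to terminate with a uniform height bound.

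Second, and more fatally, you want the reductions $\mathfrak N_{n+1}/p^n\twoheadrightarrow \mathfrak N_n$ to be compatible and the limit $\mathfrak N=\varprojlim_n\mathfrak N_n$ to be a finite-height $\mathbf A_{K_0}^+$-module whose basis is then overconvergent. This cannot work: for a general $T$ there is \emph{no} finite-height Wach module over $\gs$, and correspondingly the maximal models $\M_{(n)}$ are \emph{not} compatible under reduction. What is true (Lemma~\ref{lem list}) is that the $p^i$-torsion parts match, $\M_{(j)}[p^i]=\M_{(i)}$, but the images under reduction satisfy only a one-sided inclusion $\M_{(j,i)}\subset\M_{(i)}$. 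The whole content of the key Proposition~\ref{prop-key} is that this drift is bounded: there is $h=3fd^2p^{fd}$, independent of $n$, with $u^h\M_{(n-1,1)}\subset\M_{(n,1)}$. The overconvergent basis is then constructed not inside any single Wach module but inside $\underline{\hat M}(T)$ itself, by choosing generators $\e^{(i)}_{(n),j}$ of $\M_{(n)}$ filtered by $\M_{(n)}^{i-1,i}$, forming change-of-basis matrices $Y_n$ whose entries live in $\gs[\tfrac{p}{u^{2h}}]$, and taking the $p$-adic limit $e_j=\lim_n e_j^{(n)}$; the fact that the matrices of $\varphi$ and $\gamma$ in this basis lie in $\gs[\![\tfrac{p}{u^\beta}]\!]$ with $\beta\le 7ph$ is what produces the radius $1/(21pfd^2p^{fd})$ in the unramified case. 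This convergence argument replaces the (false) claim that a limit Wach module exists, and it is where the numerical constant actually comes from -- not, as you suggest, from a valuation comparison between $\pi$, $\varphi(\pi)$ and $\gamma(\pi)-\pi$.

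Your reduction to the unramified case is essentially correct, though note that the paper deduces it from Mackey decomposition: $T$ is a direct summand of $\Res^{G_{K_0}}_{G_K}\Ind^{G_{K_0}}_{G_K}T$, and restriction preserves overconvergence with the same radius by Cherbonnier--Colmez; one then substitutes $d\mapsto ed$ in the unramified bound.
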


\begin{rem}
\begin{enumerate}
\item We need $[K: \Qp] < \infty$ in order to apply the loose crystalline lifting results in \cite{GL}, see \cite[Rem. 1.1.2(1)]{GL} for some remarks on this condition.

\item There are some results concerning the overconvergence radius in \cite[\S 4.2]{BC08} (cf. \cite[Lem. 4.2.5, Prop. 4.2.6]{BC08}). We will show that using \textit{loc. cit.}, together with results from \cite{GL} and \cite{liu-car2}, we can also prove a certain ``uniform overconvergence" (but only \emph{implicitly}); see \S \ref{sub compa} for more details.
\end{enumerate}
\end{rem}

\begin{rem}
It is interesting to mention that in \cite{GP}, another proof of the overconvergence property of \'etale $(\varphi, \tau)$-modules is given (which works without assuming $[K: \Qp]<\infty$); however, the proof makes full use of Thm. \ref{thm intro OC}.
\end{rem}

The overconvergence theorem \ref{thm intro OC} plays a fundamental role in the application of $(\varphi, \Gamma)$-modules to study various problems, see, e.g., \cite[\S 1.2]{GL} for some discussion.
In particular, as we mentioned in \textit{loc. cit}, overconvergence helps to link the category of all Galois representations to the category of geometric (i.e., semi-stable, crystalline) representations.
Similarly as in \cite{GL}, we will already use such a link to prove Theorem \ref{thm1}. Namely, we use crystalline representations to ``approximate" general Galois representations.
By writing this paper, we hope that our approach can shed some more light on the deeper meaning of overconvergence. We also hope that this paper can serve as a useful companion to the paper \cite{GL}, so that the readers can compare between the setting of $(\varphi, \Gamma)$-modules and $(\varphi, \tau)$-modules.

\subsection{Strategy of proof}
As we mentioned earlier, the strategy is similar to \cite{GL}, with one particular caveat. Namely, for a lattice in a crystalline representation of $G_K$, we can always attach a $(\varphi, \hat G)$-module by the main result of \cite{liu4}, but we can not always attach a Wach module. Fortunately, when $K$ is unramified, i.e., when $K=K_0$, we can always attach a Wach module (we have to be careful here to avoid a ``circular reasoning", see Remark \ref{rem Wach citing} for more details).
Now, by an argument in \cite{Colmez-overcon}, overconvergence of Galois representations is insensitive to inductions (of representations); thus it suffices to prove Theorem \ref{thm1} when $K=K_0$, where we always have Wach modules (for crystalline representations).
Then the strategy (indeed, the proof itself) will be parallel to \cite{GL}, except a few minor changes (in particular, the $\varphi$-action on period rings, cf. \S \ref{item comparison}).


Let us give a quick sketch of the strategy here (very similar to \cite[\S 1.3]{GL}). We assume $K/\Qp$ is finite and $K=K_0$ in this paragraph.
By the loose crystalline lifting theorem in \cite{GL}, we can easily deduce that for each $n \ge 1$, $T_n: =T/p^nT$ admits a unique \emph{maximal liftable Wach model}. Then we can analyse these models, and use them to build an overconvergence basis to prove the theorem.

\subsection{Structure of the paper}
In \S \ref{sec phi gamma}, we collect basic facts about \'etale $(\varphi, \Gamma)$-modules and Wach modules. We define what it means for an \'etale $(\varphi, \Gamma)$-module to be overconvergent, and state the theorem of Cherbonnier-Colmez.
In \S \ref{sec Wach}, when $K/\Qp$ is finite and $K=K_0$, we show the existence of Wach models, and analyse their various properties.
Finally in \S \ref{sec OC}, we build an overconvergence basis to prove the main theorem; we also make some comparison with known proofs of overconvergence.

\subsection{Notations}
\subsubsection{Some notations in $p$-adic Hodge theory}
We put $R:=\varprojlim \limits_{x\to x^p} \O_{\overline K}/ p \O_{\overline K}$, equipped with its natural coordinate-wise action of $G_K$. Let $\Fr R$ be its fraction field.
 There is a unique surjective projection map $\theta :
W(R) \to \widehat \O_{\overline K}$ to the $p$-adic completion $ \widehat \O_{\overline K}$ of
$\O_{\overline K}$, which lifts the  projection $R \to \O_{\overline K}/ p$
onto the first factor in the inverse limit.
Let  $A_{\t {cris}}$, $B_\t{cris}^+$, $B_\t{dR}^+$ be the usual period rings.


Recall that we defined $\mu _n \in \overline K$ inductively such that $\mu_1$ is a primitive $p$-th root of unity and $(\mu_{n+1})^p = \mu_n$.
Set  $\underline \varepsilon:= (\mu_{i}) _{i \geq 0} \in R$. Let $[\underline \varepsilon]\in W(R)$ be the Techm\"uller representative, and $t:= -\log([\underline \varepsilon])\in \acris$ as usual.
Let $\gs  = W[\![u]\!]$ with Frobenius extending the arithmetic Frobenius on $W(k)$ and $\varphi (u ) = (u+1)^p-1$.
We can embed the $W(k)$-algebra $W(k)[u]$ into
$W(R)\subset\acris$ by the map $u\mapsto 1-[\underline \varepsilon]$.   This
embedding extends to an  embedding $\gs \inj W(R)$ which is compatible with
Frobenious endomorphisms.

\subsubsection{Fontaine modules and Hodge-Tate weights}
When $V$ is a semi-stable representation of $G_K$, we let $D_{\textnormal{st}} (V):  = (B_{\st} \otimes_{\Qp} V^{\vee})$ where $V^{\vee}$ is the dual representation of $V$. The Hodge-Tate weights of $V$ are defined to be $i \in \mathbb Z$ such that $\gr^i D_{\st}(V) \neq 0$.  For example, for the cyclotomic character $\varepsilon_p$, its Hodge-Tate weight is $\{ 1\}$.

\subsubsection{Some other notations}
Throughout this paper, we reserve $\varphi$ to denote Frobenius operator. We sometimes add subscripts to indicate on which object Frobenius is defined. For example, $\varphi_\M$ is the Frobenius defined on $\M$. We always drop these subscripts if no confusion will
arise. Let $S$ be a ring endowed with Frobenius $\varphi_S$ and  $M$  a module over $S$. We always denote $\varphi ^*M := S \otimes_{ \varphi_S, S } M$. Note that if $M$ has a $\varphi _S$-semi-linear endomorphism $\varphi_M: M \to M$ then $1 \otimes \varphi_M : \varphi ^* M \to M$ is an $S$-linear map. 
We also reserve $v$ to denote valuations which is normalized so that $v(p)=1$.
Finally $\Md (S)$ always denotes the ring of $d \times d$-matrices with entries in $S$ and $I_d$ denotes the $d \times d $-identity matrix.

\textbf{Acknowledgement.} I thank Laurent Berger and Tong Liu for some useful discussions. I also thank the anonymous referee(s) for helping to improve the exposition.

\section{$(\varphi, \Gamma)$-modules, Wach modules, and overconvergence} \label{sec phi gamma}
In this section, except in \S \ref{subsec reproof} (the final subsection), we will let $K$ be as in \S \ref{subsub K}, i.e., $K/\Qp$ is not necessarily finite.

In this section, we first collect some basic facts on (integral and torsion) \'etale $\varphi$-modules, \'etale $(\varphi, \Gamma)$-modules, Wach modules and their attached representations.
Then, we define what it means for an \'etale $(\varphi, \Gamma)$-module to be overconvergent, and state the classical overconvergence theorem.

\subsection{\'Etale $\varphi$-modules and $(\varphi, \Gamma)$-modules}
Let $\O _\E$ be the $p$-adic completion of $\gs[1/u]$.  
Our fixed embedding $\gs\hookrightarrow W(R)$ determined by  $\ue$
uniquely extends to a $\varphi$-equivariant embedding $\iota:\O_{\E}\hookrightarrow W(\t{Fr} R)$, and we identify $\O_{\E}$ with its image in $W(\t{Fr} R)$.
We note that $\O_\E$ is a complete discrete valuation ring with uniformizer $p$ and residue field
$k (\!(\ue)\!)$ as a subfield of $\Fr R$. Let $\E$ denote the fractional field of $\O_\E$,  $\E ^\ur$ the maximal unramified extension of $\E$ inside $W(\t{Fr} R )[\frac 1 p ]$ and $\O_{\E ^\ur}$ the ring of integers. Set $\O_{\widehat{\E} ^\ur}$ the $p$-adic completion of $\O_{\E ^\ur}$ and  $\gs ^\ur : = W(R) \cap \O _{\widehat \E^\ur}$.

Let $\O_{\E_K} := (\O_{\widehat{\E} ^\ur})^{H_K}$, and so $\Gamma_K$ acts on it.
When $K=K_0$ (i.e., $K$ is unramified), we actually have $\O_{\E_K} =\O_\E$ (see e.g., first paragraph of \cite[\S I.2]{Ber}); in this case, we have $\gamma(u)=(1+u)^{\varepsilon_p(\gamma)}-1$ for $\gamma \in \Gamma_K$.

\begin{defn}
Let $'\t{Mod}_{\O_{\E_K}}^\varphi$ denote the category of finite type $\O_{\E_K}$-modules $M $  equipped with a $\varphi_{\O_{\E_K}}$-semi-linear endomorphism $\varphi _M : M\to M$ such that $1 \otimes \varphi : \varphi ^*M \to M $ is an isomorphism. Morphisms in this category  are just $\O_{\E_K}$-linear maps compatible with $\varphi$'s. We call objects in $'\t{Mod}_{\O_{\E_K}}^\varphi$ {\em \'etale $\varphi$-modules}.
\end{defn}

\subsubsection{} Let $'\t{Rep}_{\Z_p}(H_K) $ (resp. $'\t{Rep}_{\Z_p}(G_{K}) $ ) denote the category of finite type $\Z_p$-modules $T$   with a continuous $\Z_p$-linear $H_K$ (resp. $G_K$)-action.
For $M $ in $'\t{Mod}_{\O_{\E_K}}^\varphi$, define
$$ V(M):= ( \O _{\widehat \E ^\ur} \otimes_{\O_{\E_K}} M) ^{\varphi =1}.  $$
For $V $ in  $'\t{Rep}_{\Z_p}(H_K) $, define
$$ \underline M(V):= ( \O _{\widehat \E ^\ur} \otimes_{\Z_p} V) ^{H_K}. $$

\begin{thm}[{\cite[Prop. A 1.2.6]{fo4}}]
The functors $V$ and $\underline M$ induces an exact tensor equivalence between the categories $'\t{Mod}_{\O_{\E_K}} ^\varphi$  and $'\t{Rep}_{\Z_p} (H_K)$.
\end{thm}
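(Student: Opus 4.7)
The plan is to proceed in three steps of increasing generality: first with $\BF_p$-coefficients, then with $\Z/p^n$-coefficients by dévissage, and finally with finite type $\Z_p$-modules by a limit argument. Throughout, the strategy is to exhibit natural comparison maps
$$\O_{\widehat \E^\ur} \otimes_{\O_{\E_K}} M \longrightarrow \O_{\widehat \E^\ur} \otimes_{\Z_p} V(M), \qquad \O_{\widehat \E^\ur} \otimes_{\Z_p} T \longrightarrow \O_{\widehat \E^\ur} \otimes_{\O_{\E_K}} \underline M(T),$$
and show they are isomorphisms; the quasi-inverse identifications $\underline M \circ V \simeq \mathrm{id}$ and $V \circ \underline M \simeq \mathrm{id}$ then follow by taking $\varphi=1$-invariants and $H_K$-invariants respectively, using that $\O_{\widehat \E^\ur}$ is faithfully flat over both $\O_{\E_K}$ and $\Z_p$.

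First I would establish the $\BF_p$-coefficient case. The key input is that $\O_{\widehat \E^\ur}/p$ is a separable closure of the residue field $\O_{\E_K}/p$, equipped with its natural $H_K$-action; this is essentially the content of Fontaine--Wintenberger's norm field theorem, which identifies $H_K$ with the absolute Galois group of $\O_{\E_K}/p$. Artin--Schreier theory then yields $(\O_{\widehat \E^\ur}/p)^{\varphi = 1} = \BF_p$ together with the surjectivity of $\varphi - 1$ on $\O_{\widehat \E^\ur}/p$. This surjectivity, via a successive approximation argument, implies that every étale $\varphi$-module $M$ over $\O_{\E_K}/p$ admits a basis of $\varphi$-fixed vectors after base change to $\O_{\widehat \E^\ur}/p$, so that $V(M)$ has the correct $\BF_p$-length. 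Combined with Hilbert 90 for $\GL_d$ over the field $\O_{\widehat \E^\ur}/p$, this establishes the equivalence in the mod-$p$ case.

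Next I would run a dévissage on the order of $p$-torsion. For $M$ killed by $p^n$ with $n \ge 2$, the short exact sequence $0 \to pM \to M \to M/pM \to 0$ stays exact under $V$ (exactness on the associated graded pieces reduces to the surjectivity of $\varphi - 1$ established above, then propagates by induction and the snake lemma). Applying the five-lemma to the comparison morphism $M \to \underline M(V(M))$, using the induction hypothesis for $pM$ and $M/pM$, extends the equivalence to all $p^n$-torsion objects. For a finite free $\O_{\E_K}$-module $M$, I would write $M = \varprojlim_n M/p^n M$; since $\O_{\widehat \E^\ur}$ is $p$-adically complete and the inverse system $\{V(M/p^n M)\}$ satisfies Mittag--Leffler, passing to the limit gives the equivalence in the free case, and combining free and torsion cases gives it for all finite type objects. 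Exactness and compatibility with tensor products then follow formally, since both functors are built from tensoring with $\O_{\widehat \E^\ur}$, which is faithfully flat over both bases.

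The main obstacle is the very first step: identifying $\O_{\widehat \E^\ur}/p$ as a separable closure of $\O_{\E_K}/p$ with the compatible $H_K$-action. This is precisely the deep content of Fontaine--Wintenberger's norm field construction (or equivalently Fontaine's direct argument using the upper-numbering ramification filtration), matching $H_K = \Gal(\overline K/K_{p^\infty})$ with the absolute Galois group of the imperfect local field $\O_{\E_K}/p$. Once this identification and the resulting Artin--Schreier surjectivity are in hand, the remainder of the proof is a sequence of comparatively routine homological, descent, and completion arguments.
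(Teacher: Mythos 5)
The paper does not give its own proof of this statement: it is quoted verbatim from Fontaine, \cite[Prop.\ A~1.2.6]{fo4}, so there is no in-paper argument to compare against. What you have written is a correct reconstruction of the standard (Fontainian) proof, and it hits the essential ingredients in the right order: the Fontaine--Wintenberger identification of $H_K$ with the absolute Galois group of the norm field $\O_{\E_K}/p$, Artin--Schreier surjectivity of $\varphi-1$ on the separably closed residue field $\O_{\widehat\E^\ur}/p$, the resulting trivialization of a mod-$p$ \'etale $\varphi$-module after base change (Lang's theorem), Galois descent via Hilbert~90 for the reverse direction, then d\'evissage on the $p$-power torsion, and finally a limit argument for the free case. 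All of these steps are sound.

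Two small remarks on precision, neither of which is a genuine gap. First, your ``Hilbert 90 for $\GL_d$'' should be read as continuous nonabelian $H^1$ of the profinite group $H_K$ acting on $\GL_d$ of the separable closure; this is fine because the action on a finite-dimensional $T$ factors through a finite quotient, so one reduces to classical Hilbert 90 and then passes to the limit. Second, for the passage from the free plus torsion cases to arbitrary finite type $\O_{\E_K}$-modules you should note that the torsion submodule $M_{\mathrm{tors}}$ of an \'etale $\varphi$-module is automatically $\varphi$-stable and the quotient $M/M_{\mathrm{tors}}$ is again \'etale, so one gets a short exact sequence to which the five lemma applies; the naive statement that $M$ ``decomposes'' as free plus torsion is true as $\O_{\E_K}$-modules but need not be $\varphi$-equivariant, so the short-exact-sequence formulation is the correct one. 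Also, where you invoke ``successive approximation'' in the mod-$p$ step, nothing of that sort is needed there (that is a field-theoretic Lang/Artin--Schreier statement); successive approximation enters rather in lifting the trivialization through $p$-power congruences, i.e., in the d\'evissage and limit stages. With these minor clarifications the argument is complete and is, as you say, essentially Fontaine's.
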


\begin{defn} \label{defn phi gamma}
An \'etale \emph{$(\varphi, \Gamma)$-module} is a triple $(M, \varphi_M, \Gamma_M)$ where
\begin{itemize}
\item $(M , \varphi_M)$ is an \'etale $\varphi$-module;
\item $\Gamma_M$ is a continuous $\O_{\E_K}$-semi-linear $\Gamma_K$-action on $M$ which commutes with $\varphi_{M}$.
\end{itemize}
\end{defn}

\begin{convention}
Since we also use $M$ to denote an \'etale $\varphi$-module, we will now use $\hat M =(M, \varphi_M, \Gamma_M)$ to denote an \'etale $(\varphi, \Gamma)$-module. Clearly, this notation compares with the \'etale $(\varphi, \tau)$-modules that we use in \cite{GL}.
\end{convention}

For an \'etale $(\varphi, \Gamma)$-module $\hat M =(M, \varphi_M, \Gamma)$, define
$$ V(\hat M):= ( \O _{\widehat \E ^\ur} \otimes_{\O_{\E_K}} M) ^{\varphi =1},$$
which is a $G_K$-representation.
For $V $ in  $'\t{Rep}_{\Z_p}(G_K) $, define
$$ \underline{\hat{M}}(V):= ( \O _{\widehat \E ^\ur} \otimes_{\Z_p} V) ^{H_K},$$
which is an \'etale $(\varphi, \Gamma)$-module.

\begin{thm}[\cite{fo4}]
The functors $V$ and $\underline{\hat{M}}$ induces an exact tensor equivalence between the categories $'\t{Mod}_{\O_{\E_K}} ^{\varphi, \Gamma}$  and $'\t{Rep}_{\Z_p} (G_K)$.
\end{thm}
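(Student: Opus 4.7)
The plan is to bootstrap from the Fontaine equivalence between $'\t{Mod}_{\O_{\E_K}}^\varphi$ and $'\t{Rep}_{\Z_p}(H_K)$ recorded just above, by exploiting the short exact sequence $1 \to H_K \to G_K \to \Gamma_K \to 1$ together with the identification $\O_{\E_K} = (\O_{\widehat \E^\ur})^{H_K}$ (so the residual $G_K/H_K$-action on $\O_{\E_K}$ is, by construction, the fixed $\Gamma_K$-action). Conceptually, adding a $\Gamma_K$-structure on the module side and extending the $H_K$-structure to a $G_K$-structure on the representation side are the ``same'' datum once the $H_K$-equivalence is in place.

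First I would construct the $G_K$-action on $V(\hat M)$. Given $\hat M = (M, \varphi_M, \Gamma_M)$, I would equip $\O_{\widehat \E^\ur} \otimes_{\O_{\E_K}} M$ with the diagonal action in which $g \in G_K$ acts on the first factor via the standard $G_K$-action and on the second factor via its image $\bar g \in \Gamma_K$. Well-definedness modulo the tensor relation is automatic because the $\Gamma_K$-action on $\O_{\E_K}$ coincides with the restriction of the $G_K$-action on $\O_{\widehat\E^\ur}$. Since both actions commute with $\varphi$ by hypothesis, the subspace $V(\hat M) = (\O_{\widehat \E^\ur} \otimes_{\O_{\E_K}} M)^{\varphi=1}$ is $G_K$-stable. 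Conversely, for $V \in{} '\t{Rep}_{\Z_p}(G_K)$, the underlying $H_K$-representation already produces an étale $\varphi$-module via the preceding theorem, and since $H_K$ is normal in $G_K$ the residual $\Gamma_K = G_K/H_K$ acts $\O_{\E_K}$-semilinearly on $\underline{\hat M}(V) = (\O_{\widehat \E^\ur} \otimes_{\Z_p} V)^{H_K}$, commuting with $\varphi$, which is precisely the required $(\varphi, \Gamma)$-module datum.

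To finish the equivalence I would check that the two constructions are quasi-inverse, that the functors are exact, and that they are compatible with tensor products. All three should be inherited essentially for free from the $H_K$-level equivalence: the natural isomorphisms $V \simeq V(\underline{\hat M}(V))$ and $\hat M \simeq \underline{\hat M}(V(\hat M))$ already exist at the $(\varphi, H_K)$-level, and one need only observe that they transport the $\Gamma_K$- and $G_K$-actions into one another, which is immediate from the symmetry of the two constructions. Exactness and tensor compatibility then follow because the forgetful functors (forgetting $\Gamma_K$, respectively restricting from $G_K$ to $H_K$) are exact and symmetric monoidal, and the added actions are diagonal.

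The main (though still technical) obstacle I would anticipate is verifying continuity. On the one hand, the $G_K$-action on $V(\hat M)$ must be continuous for the $p$-adic topology so that it genuinely lies in $'\t{Rep}_{\Z_p}(G_K)$; on the other, the $\Gamma_K$-action on $\underline{\hat M}(V)$ must be continuous for the natural topology coming from $\O_{\widehat \E^\ur}$ so that it matches Definition \ref{defn phi gamma}. Both should reduce to the continuity built into $\hat M$ (resp.\ $V$) combined with continuity of the standard $G_K$-action on $\O_{\widehat \E^\ur}$, but this is where I would expect to spend the most care.
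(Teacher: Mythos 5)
The paper offers no proof of this theorem; it is stated purely as a citation to Fontaine \cite{fo4}, and the surrounding text treats it as a black box. So there is nothing internal to compare your argument against. That said, your bootstrapping strategy is the standard one and is essentially correct: the equivalence between $(\varphi,\Gamma)$-modules and $G_K$-representations really is obtained from the $(\varphi,H_K)$-equivalence by observing that a $\Gamma_K$-structure on the module side and an extension from $H_K$ to $G_K$ on the representation side correspond to the same extra datum under the functor, precisely because $\O_{\E_K} = (\O_{\widehat\E^\ur})^{H_K}$ and $\Gamma_K = G_K/H_K$.

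One place you are a bit casual is the phrase that the natural isomorphisms at the $(\varphi,H_K)$-level ``transport the $\Gamma_K$- and $G_K$-actions into one another \dots\ immediate from the symmetry.'' The actual content here is that the Fontaine comparison isomorphism
$\O_{\widehat\E^\ur} \otimes_{\O_{\E_K}} \underline M(V) \simeq \O_{\widehat\E^\ur} \otimes_{\Z_p} V$
is $G_K$-equivariant when both sides are given the diagonal $G_K$-action (acting through $\Gamma_K$ on the $\underline M(V)$ factor), and not merely $H_K$-equivariant; taking $\varphi=1$ respectively $H_K$-invariants then transports the structures. This is true and not hard, but it is a genuine check, not a formal consequence of symmetry. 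Your flagging of continuity as the real technical point is apt: one must verify that the $\Gamma_K$-action on $\underline{\hat M}(V)$ is continuous for the weak topology on $\O_{\E_K}$-modules (not just the $p$-adic one), and conversely that the induced $G_K$-action on $V(\hat M)$ is continuous for the $p$-adic topology. These are exactly the points where Fontaine's original argument expends care, so you have correctly identified where the work lies even if you do not carry it out.
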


\subsection{Wach $\varphi$-modules and Wach modules}
In this subsection, we assume $K=K_0$. As we mentioned earlier, we have $\O_\E=\O_{\E_K}$ in this case.
We write
$$q:=\frac{\varphi( [\underline \varepsilon] -1)}{[\underline \varepsilon] -1} = \frac{\varphi(u)}{u} =\frac{(1+u)^p-1}{u}.$$
\begin{defn} For a nonnegative integer $r$,
we write $'\sfi$  for the category of finite-type $\gs$-modules $\M$ equipped with
a $\varphi_{\gs}$-semilinear endomorphism $\varphi_\M : \M \to \M$ satisfying
\begin{itemize}
	\item the cokernel of the linearization $1\otimes \varphi: \varphi ^*\M \to \M$ is killed by $q^r$;
	\item the natural map $\M \to \O_\E \otimes_{\gs} \M$ is injective.
\end{itemize}
 Morphisms in $'\sfi$ are $\varphi$-compatible $\gs$-module homomorphisms.
\end{defn}
We call objects in $'\sfi$ \emph{Wach $\varphi$-module of height $r$}. The category of {\em finite free Wach $\varphi$-modules of height $r$},
denoted $\sfi$, is the full subcategory of $'\sfi$ consisting
of those objects which are finite free over $\gs$. We call an object $\M \in {'\sfi}$ a {\em torsion Wach $\varphi$-module of height $r$} if $\M$ is killed by $p^n$ for some $n$.

For any Wach $\varphi$-module $\M \in '\sfi$, we define
$$T^\ast (\M):= (\M \otimes_{\gs } \O _{\widehat \E ^\ur} ) ^{\varphi=1}.$$


\begin{definition}
A finite free (resp. torsion)\emph{Wach module of height $r$} is a triple $\hat \M =(\M, \varphi_\M, \Gamma_\M)$, where
\begin{enumerate}
\item $(\M, \varphi_\M)\in {'\sfi}$ is a finite free (resp. torsion) Wach $\varphi$-module of height $ r$;
\item $\Gamma_\M$ is a continuous $\Gamma_K$-action on $\M$ which commutes with $\varphi_\M$;
\item $\Gamma_\M$ acts on $\M / u\M$ trivially.
 \end{enumerate}
\end{definition}
For any Wach module $\hat \M$, we can attach a $\Z_p [G_K]$-module via
$$\hat{T}^\ast (\hat \M):= (\M \otimes_{\gs } \O _{\widehat \E ^\ur} ) ^{\varphi=1}.$$


\begin{theorem} \cite{Ber, Kisin-Ren} \label{thm: WachBerger}
The functor $\hat{T}^\ast$ induces an anti-equivalence between
the category of finite free Wach modules of height $r$
and the category of $G_K$-stable $\Z_p$-lattices in crystalline representations with Hodge-Tate weights in $\{-r, \dots, 0\}$.
\end{theorem}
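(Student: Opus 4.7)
The plan is to establish the anti-equivalence by reducing to the (known) rational version due to Berger and then performing an integral descent. Write $\mathbf{B}_K^+ := \gs[1/p]$ and $\mathbf{B} := \O_{\widehat \E^\ur}[1/p]$. At the rational level, Berger \cite{Ber} associates to every crystalline $\Qp$-representation $V$ with Hodge-Tate weights in $\{-r,\dots,0\}$ a unique $\mathbf{B}_K^+$-submodule $N(V)$ of the $(\varphi,\Gamma)$-module $D(V) := (\mathbf{B} \otimes_{\Qp} V)^{H_K}$ which is free of rank $\dim_{\Qp} V$, stable under $\varphi$ and $\Gamma_K$, has cokernel of $1 \otimes \varphi$ killed by $q^r$, and on which $\Gamma_K$ acts trivially modulo $u$. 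This yields the desired (anti-)equivalence at the $\Qp$-level, with inverse induced by $(\cdot \otimes_{\gs} \mathbf{B})^{\varphi=1}$.

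For the integral descent, given a $G_K$-stable $\Zp$-lattice $T \subset V$, I would define the candidate integral Wach module by intersection inside $D(V)$:
\[
N(T) := N(V) \cap D(T),
\]
where $D(T) := (\O_{\widehat \E^\ur} \otimes_{\Zp} T)^{H_K}$ is the integral $(\varphi,\Gamma)$-module. This is manifestly a $\varphi$- and $\Gamma_K$-stable $\gs$-submodule of $N(V)$, is $u$- and $p$-torsion free, and inherits from $N(V)$ the triviality of $\Gamma_K$ modulo $u$. The two crucial verifications are: (a) $N(T)$ is a finite free $\gs$-module of rank $d := \rank_{\Zp} T$; and (b) $N(T) \otimes_{\gs} \mathbf{B}_K^+ = N(V)$ together with $N(T) \otimes_{\gs} \O_{\E_K} = D(T)$. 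Granted (a) and (b), the height condition on $N(T)$ descends from that on $N(V)$ by faithful flatness after inverting $p$, and $\hat{T}^*(N(T)) = T$ follows from Fontaine's equivalence applied to the \'etale $(\varphi,\Gamma)$-module $D(T) = N(T) \otimes_{\gs} \O_{\E_K}$.

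The main obstacle is (a), for which I would use a $p$-adic limit argument. One first shows $N(T)$ is $p$-adically separated and complete, and that each quotient $N(T)/p^n N(T)$ identifies with an integral Wach module attached to the torsion representation $T/p^n T$, free of rank $d$ over $\gs/p^n$. The latter is proved by d\'evissage to $n=1$, where it becomes a structural statement about finite-height $\varphi$-modules over $k[\![u]\!]$ carrying a $\Gamma_K$-action trivial modulo $u$. Passing to the inverse limit and using $u$-torsion freeness, a standard criterion over $\gs$ then forces $N(T)$ to be free of rank $d$; compatibility (b) follows by comparing the two localizations at each $p^n$-level.

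Finally, for the inverse direction, given a finite free Wach module $\hat\M$ of height $r$, set $T := \hat{T}^*(\hat\M)$, which is a $G_K$-stable $\Zp$-lattice of rank $d$ by Fontaine's integral equivalence applied to the \'etale $(\varphi,\Gamma)$-module $\M \otimes_{\gs} \O_{\E_K}$. After inverting $p$, $\hat\M[1/p]$ becomes a rational Wach module of height $r$, so by Berger's rational theorem $V := T[1/p]$ is crystalline with Hodge-Tate weights in $\{-r,\dots,0\}$. The equality $\hat\M = N(T)$ inside $D(T)$ is a uniqueness statement that reduces to the rational uniqueness of $N(V) = \hat\M[1/p]$ combined with (a) and (b), closing the anti-equivalence.
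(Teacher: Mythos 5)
The paper does not actually prove Theorem~\ref{thm: WachBerger}; it cites it to \cite{Ber, Kisin-Ren}, and Remark~\ref{rem Wach citing} is devoted to explaining a subtle but crucial choice of which reference to lean on.

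Your proposal is essentially a reconstruction of Berger's original argument in \cite{Ber}: invoke the rational Wach module $N(V)$ for $V=T[1/p]$, define $N(T):=N(V)\cap D(T)$, and prove finiteness and freeness by d\'evissage to the $p^n$-torsion quotients. Viewed in isolation this route is viable. However, it is precisely the route the paper cannot take. As Remark~\ref{rem Wach citing} explains, the existence of the rational Wach module $N(V)$ --- equivalently, the finite-height property of crystalline representations over $\gs[1/p]$ --- was established by Colmez in \cite{Col99} \emph{using the overconvergence theorem} of \cite{Colmez-overcon}, which is exactly the theorem the present paper sets out to reprove. Your proposal therefore imports the overconvergence theorem through the back door and renders the paper's logic circular.

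This is why the paper deliberately grounds Theorem~\ref{thm: WachBerger} on the Kisin--Ren proof \cite{Kisin-Ren}, which takes a genuinely independent path: $\varphi$-modules over the Robba ring and Kedlaya's slope filtration theorem \cite{Kedlaya}, in the spirit of \cite{kisin2}, none of which relies on \cite{Colmez-overcon}. If you want a proof admissible in this paper, you would need to replace the appeal to Berger's rational Wach theory with the Kisin--Ren construction (or prove finite height by an overconvergence-free route). As written, your proposal has no internal mathematical gap, but it is logically inadmissible here because of the circularity the authors took care to avoid.
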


\begin{rem} \label{rem Wach citing}
The above theorem in this form is first proved by Berger \cite[Thm. 2]{Ber}, which critically uses results by Wach and Colmez \cite{Wac96, Col99}. However, the work in \cite{Col99} (which proves that crystalline representations are of finite height) uses the overconvergence theorem in \cite{Colmez-overcon}.

Fortunately, there is a second proof of the above theorem by Kisin and Ren (\cite[Thm. 0.1]{Kisin-Ren}), which is similar to \cite{kisin2} (and does not use results in \cite{Colmez-overcon}). Note that both \cite{kisin2} and \cite{Kisin-Ren} use Kedlaya's result on slope filtration over the Robba ring (\cite{Kedlaya}), but that is independent of \cite{Colmez-overcon}.

In summary, by citing \cite[Thm. 0.1]{Kisin-Ren} for Theorem \ref{thm: WachBerger}, there would be \emph{no circular reasoning} in our paper to reprove overconvergence theorem of \cite{Colmez-overcon}.
\end{rem}

\begin{convention}
From now on in this paper, by a crystalline representation, we always mean a crystalline representation with \emph{non-positive} Hodge-Tate weights.
\end{convention}

\subsubsection{} \label{tate twist}
For any integer  $s \ge 0$, let $\hat \M(\varepsilon_p ^{-s})$ denote the Wach $\varphi$-module corresponding to $\varepsilon_p^{-s}$ via our Theorem \ref{thm: WachBerger}.
It is easy to check that this is a rank-1 $\gs$-module with a base $\fe$ so that $\varphi (\fe) = q^s \fe $.  For any Wach $\varphi$-module $\M$, we denote $\M (s):  = \M \otimes \M (\varepsilon _p^{-s})$. So $\varphi_{\M({s})} = q^s \varphi_\M$.

\subsection{Wach $\varphi$-models and Wach models}
In this subsection, we assume $K=K_0$. As we mentioned earlier, we have $\O_\E=\O_{\E_K}$ in this case.
\begin{defn}
\begin{enumerate}
  \item Given an \'etale $\varphi$-module $M$ in  $'\t{Mod} _{\O_\E} ^{\varphi}$.   If $\M \in {'\sfi}$ is a Wach $\varphi$-module so that $M =  \O_\E \otimes_\gs \M $, then $\M$ is called a \emph{Wach $\varphi$-model} of $M$, or simply a \emph{model} of $M$.

  \item  Given $\hat M:  = (M, \varphi_M, \Gamma_M)$ a torsion (resp. finite free) $(\varphi, \Gamma)$-module. A torsion (resp. finite free) Wach module  $\hM : = (\M, \varphi_\M, \Gamma_\M)$ is called a \emph{model} of $\hat M$ if $\M$ is a model of $M$ and the isomorphism $\O_\E \otimes_{\gs} \M \simeq M $ is compatible with $\Gamma_K$-actions on both sides.
\end{enumerate}
\end{defn}

The following lemma is obvious.
\begin{lemma}
  \begin{enumerate}
    \item If $\M$ is a model of $M$, then $T^\ast(\M)\simeq V(M)$ as  $\Z_p[H_K]$-modules.
    \item If $\hM$ is a model of $\hat M$ then $\hat{T}^\ast(\hM)\simeq V(\hat M)$ as $\Z_p[G_K]$-modules.
  \end{enumerate}
\end{lemma}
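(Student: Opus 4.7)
The plan is to produce a natural isomorphism
\[
\M \otimes_\gs \O_{\widehat \E^{\ur}} \;\simeq\; M \otimes_{\O_\E} \O_{\widehat \E^{\ur}}
\]
of $\O_{\widehat \E^{\ur}}$-modules that is simultaneously compatible with $\varphi$ and with the relevant Galois action, and then take $\varphi = 1$-invariants on both sides. Everything is formal, in line with the authors labeling this lemma ``obvious''.

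For part (1), the displayed isomorphism is obtained by factoring the tensor product through the model relation $M \simeq \O_\E \otimes_\gs \M$, i.e.\ $\M \otimes_\gs \O_{\widehat \E^{\ur}} \simeq \M \otimes_\gs \O_\E \otimes_{\O_\E} \O_{\widehat \E^{\ur}} \simeq M \otimes_{\O_\E} \O_{\widehat \E^{\ur}}$. It is $\varphi$-equivariant because $\varphi_M$ is by construction induced from $\varphi_\M$ together with $\varphi$ on $\O_\E$, while $\varphi$ on $\O_{\widehat \E^{\ur}}$ is the same on both sides. Neither $M$ nor $\M$ carries a Galois action at the $\varphi$-module level, and $\gs \subset \O_\E = \O_{\E_K}$ is pointwise fixed by $H_K$, so on both sides the $H_K$-action comes purely through the $\O_{\widehat \E^{\ur}}$-factor, making the isomorphism $H_K$-equivariant. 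Applying $(-)^{\varphi = 1}$ then yields $T^\ast(\M) \simeq V(M)$ as $\Z_p[H_K]$-modules.

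For part (2), the same displayed isomorphism works, but I need to upgrade from $H_K$-equivariance to $G_K$-equivariance. The hypothesis that $\hM$ is a model of $\hat M$ says precisely that $\O_\E \otimes_\gs \M \simeq M$ intertwines the $\Gamma_K$-actions on the two sides. On each side of the displayed isomorphism, define a $G_K$-action by letting $g \in G_K$ act through its image $\bar g \in \Gamma_K$ on the first factor and as $g$ on $\O_{\widehat \E^{\ur}}$; this is well-defined and compatible across the isomorphism. Taking $\varphi = 1$-invariants produces $\hat T^\ast(\hM) \simeq V(\hat M)$ as $\Z_p[G_K]$-modules. There is no substantive obstacle; the only thing to watch is the $\Gamma_K$-versus-$G_K$ bookkeeping, which is clean because $H_K$ acts trivially on each of $\gs$, $\O_\E$, $\M$, and $M$.
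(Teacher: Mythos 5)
Your proof is correct and is exactly the unwinding the paper has in mind; the paper labels this lemma ``obvious'' and gives no proof, so you are simply supplying the standard base-change argument. The one point worth keeping explicit, which you do handle, is the well-definedness of the $G_K$-action on the two tensor products in part (2): it rests on the facts that $\O_{\E_K}=(\O_{\widehat\E^\ur})^{H_K}$ (so the $G_K$-action on $\O_{\widehat\E^\ur}$ restricts to the $\Gamma_K$-action on $\O_\E$) and that $\Gamma_K$ stabilizes $\gs\subset\O_\E$, so that $g\cdot(a\otimes m)=g(a)\otimes\bar g(m)$ is compatible with the $\O_\E$- (resp.\ $\gs$-) bilinearity of the tensor product.
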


Clearly, the most natural models of \'etale $(\varphi, \Gamma)$-modules come from lattices in crystalline representations.
\begin{lemma} \label{lem model}
Suppose $T$ is a $G_K$-stable $\Z_p$-lattice in a crystalline representation  of $G_K$ with Hodge-Tate weights in $\{-r, \dots, 0 \}$.
Let $\hat M:  = (M, \varphi_M, \Gamma_M)$ be the $(\varphi, \Gamma)$-module associated to $T$.
Let $\hM : = (\M, \varphi_\M, \Gamma_\M)$ be the Wach-module associted to $T$.
Then $\hM$ is a model of $\hat M$.
\end{lemma}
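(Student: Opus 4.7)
The strategy is to descend, via $H_K$-invariants, a trivialization of $\hM$ over the big period ring $\O_{\widehat \E^\ur}$. Both $M$ and $\hM$ are attached to $T$ functorially, so passing through $\O_{\widehat \E^\ur}$ should put them side by side.

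By Theorem \ref{thm: WachBerger} we have $\hat T^\ast(\hM) \simeq T$ as $\Z_p[G_K]$-modules, so by definition $T$ sits inside $\M \otimes_\gs \O_{\widehat \E^\ur}$ as the $\varphi$-invariants. The first step is to promote this to a $\varphi$- and $G_K$-equivariant isomorphism
\[
\Phi : T \otimes_{\Z_p} \O_{\widehat \E^\ur} \xrightarrow{\sim} \M \otimes_\gs \O_{\widehat \E^\ur},
\]
where the $G_K$-action on the left is diagonal, and on the right it is via $\Gamma_\M$ on the $\M$-factor (factoring through $\Gamma_K = G_K/H_K$, so $H_K$ acts trivially on $\M$) and naturally on $\O_{\widehat \E^\ur}$. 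The existence of such $\Phi$ follows from the fact that $\O_\E \otimes_\gs \M$ is an \'etale $\varphi$-module over $\O_\E$: indeed $q = ((1+u)^p-1)/u \equiv u^{p-1} \pmod p$ is a unit in $\O_\E$, so the height condition cokernel vanishes after inverting $u$, and then the standard equivalence of \cite{fo4} over $\O_{\widehat \E^\ur}$ produces $\Phi$.

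The second step is to take $H_K$-invariants of $\Phi$. On the left one obtains $M = \underline{\hat M}(T)$ by definition. On the right, the embedding $\gs \inj W(R)$ given by $u \mapsto [\underline \varepsilon]-1$ lands in $W(R)^{H_K}$ since $\underline \varepsilon$ is fixed by $H_K$; combined with the triviality of the $H_K$-action on $\M$ and the freeness of $\M$ over $\gs$, invariants commute with $\otimes_\gs$ and yield
\[
\bigl(\M \otimes_\gs \O_{\widehat \E^\ur}\bigr)^{H_K} = \M \otimes_\gs \bigl(\O_{\widehat \E^\ur}\bigr)^{H_K} = \M \otimes_\gs \O_{\E_K} = \O_\E \otimes_\gs \M,
\]
where the last identification uses $K = K_0$. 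This gives a $\varphi$-equivariant $\O_\E$-linear isomorphism $\O_\E \otimes_\gs \M \simeq M$, and the $\Gamma_K$-equivariance is automatic since $G_K/H_K = \Gamma_K$ acts on each side through the descended $G_K$-action from $\Phi$.

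The only genuine thing to check is that the $G_K$-action transported from the left of $\Phi$ onto $\M \otimes_\gs \O_{\widehat \E^\ur}$ matches the natural one coming from the Wach module structure (i.e.\ $\Gamma_\M$ on $\M$, naturally on $\O_{\widehat \E^\ur}$); but this is built into the construction of the functor $\hat T^\ast$, where the inclusion $T \hookrightarrow \M \otimes_\gs \O_{\widehat \E^\ur}$ is $G_K$-equivariant by definition. Thus there is no real obstacle, and the lemma reduces to unpacking the two constructions against the single trivialization $\Phi$.
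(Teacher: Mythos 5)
Your proof is correct, and the approach — construct the Fontaine comparison isomorphism $\Phi : T \otimes_{\Z_p} \O_{\widehat\E^\ur} \simeq \M \otimes_\gs \O_{\widehat\E^\ur}$, check it is $G_K$-equivariant using Theorem~\ref{thm: WachBerger}, and then take $H_K$-invariants — is the standard one; the paper simply labels the proof ``Easy'' and gives no argument, so you have supplied exactly the details the author chose to omit.

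A few small points worth being aware of. First, to use the lemma one is implicitly in the subsection where $K=K_0$ (you correctly invoke $\O_{\E_K}=\O_\E$), so state that assumption up front. Second, when you say ``the standard equivalence of \cite{fo4} over $\O_{\widehat\E^\ur}$ produces $\Phi$,'' it is slightly cleaner to name the precise input: for any \'etale $\varphi$-module $M'$ over $\O_{\E_K}$ the natural map $V(M')\otimes_{\Z_p}\O_{\widehat\E^\ur}\to M'\otimes_{\O_{\E_K}}\O_{\widehat\E^\ur}$ is a $\varphi$-equivariant isomorphism; applied to $M'=\O_\E\otimes_\gs\M$ (\'etale because $q$ is a unit in $\O_\E$, exactly as you observe) this gives $\Phi$, with $V(M')=\hat T^\ast(\hM)\simeq T$ as $\Z_p[G_K]$-modules by Theorem~\ref{thm: WachBerger}. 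Third, an even quicker route, avoiding $\Phi$ altogether, is to note that $\hat M' := (\O_\E\otimes_\gs\M,\varphi,\Gamma)$ is itself an \'etale $(\varphi,\Gamma)$-module with $V(\hat M')=\hat T^\ast(\hM)\simeq T$, so by the Fontaine equivalence $\hat M'\simeq\underline{\hat M}(T)=\hat M$; that is a one-line reformulation of your argument. None of these affect correctness.
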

\begin{proof}
Easy.
\end{proof}

Now suppose that $T_n$ is a $p$-power torsion representation of $G_K$, and $M_n$ the \'etale $\varphi$-module associated to $T_n|_{H_K}$. A natural source of Wach $\varphi$-models of $M_n$ comes from the following. Suppose that we have a surjective map of $G_K$-representations $f: L \onto T_n$ where $L$ is a crystalline finite free $\Zp$-representation, then it induces the surjective map (which we still denote by $f$) $f: \mathcal L \onto M_n$, where $\mathcal L$ is the \'etale $\varphi$-module associated to $L|_{H_K}$. If $\mathfrak L$ is the Wach $\varphi$-module associated to $L$, then by Lemma \ref{lem model} above, $\mathfrak L$ is a Wach $\varphi$-model of $\mathcal L$. And so $f(\mathfrak L)$ is clearly a Wach $\varphi$-model of $M_n$.

\begin{example} \label{ex-cyclotomic} Let $T_1= \BF_p $ be the trivial $G_K$-representation and $M_1$ denote the corresponding trivial \'etale $\varphi$-module. Then $\M_1 = \ku$ is a model of $M_1$ which is realized by the surjection $\Z_p \twoheadrightarrow \BF_p $.
We also have another surjection $\Z_p (1-p) \twoheadrightarrow  \BF_p$, which realizes another model $\M_1 (p-1)\subset M_1$.
It is easy to check that $\M_1 (p-1) = u ^{p-1} \M_1$.
\end{example}

\subsubsection{Comparison between Kisin modules and Wach $\varphi$-modules} \label{item comparison}
Let us compare the following two situations:
\begin{enumerate}
  \item (Situation 1.) Kisin modules (see, e.g., \cite[\S 2]{GL} for more details) for a general $K$ where $[K: K_0]=e$ (and let us fix a uniformizer $\pi$ of $K$, with Eisenstein polynomial $E(u) \in W(k)[u]$ of degree $e$). Note that $\varphi(u) =u^p$ in this case.
  \item (Situation 2.) Wach $\varphi$-modules when $K=K_0$. Recall that $q =\frac{(1+u)^p-1}{u} \in W(k)[u]$ of degree $p-1$. Note that $\varphi(u) =(1+u)^p-1$ in this case.
\end{enumerate}
The theories in these two situations have lots of similarities.
In particular, when we prove something for Wach $\varphi$-modules, the proof could be almost verbatim as the corresponding result for Kisin modules; oftenly, we only need to change the $E(u)$ in Situation 1 to $q$ in Situation 2 (and some other slight modifications). We already see this in \S \ref{tate twist} (compare with \cite[Ex. 2.3.4]{GL}).
More notably, when we are in modulo $p$ situations, note that $E(u)=u^e (\bmod p)$ and $q =u^{p-1} (\bmod p)$; then oftenly, we only need to change $e$ in Situation 1 to $p-1$ in Situation 2 to prove results about Wach modules. We already see this in Example \ref{ex-cyclotomic} (compare with \cite[Ex. 2.4.4]{GL}).
Here is another illustration of the general phenomenon (switch between $e$ and $p-1$):
\begin{itemize}
\item Given a $p$-torsion Kisin module in (Situation 1) of $E(u)$-height $r$ (cf. \cite[\S 2]{GL}), let $A$ be the matrix for $\varphi$, then there exists $B$ such that $AB=u^{er}$.
\item Given a $p$-torsion Wach $\varphi$-module in (Situation 2) of $q$-height $r$, let $A$ be the matrix for $\varphi$, then there exists $B$ such that $AB=u^{(p-1)r}$.
\end{itemize}

\subsection{Overconvergence} \label{subsec OC}
In this subsection, we let $K$ be as in \S \ref{subsub K}.

For any $x \in W(\Fr R)$, we can write $x = \sum \limits_{i = 0}^\infty p ^i [x _i]$ with $x_i\in \Fr R$. Denote $v_R (\cdot)$ the valuation on $\FrR$ and normalized by
$$v_R(u (\bmod p))=v_R(1-\varepsilon) =\frac{p}{p-1}.$$
For any $r \in \mathbb R^{>0}$, set
$$ W(\Fr R) ^{\dag, r}: =\left \{ x= \sum _{i = 0}^\infty p ^i [x _i] \in W(\FrR) | \  i  +   r\frac{p-1}{p} v_R (x_i ) \to + \infty \right \}. $$
It turns out that $ W(\FrR) ^{\dag, r}$ is a ring, stable under $G_K$-action but not Frobenius, namely $\varphi (W(\FrR)^{\dag, r}) = W(\FrR)^{\dag, r/p}$. See \cite[\S II.1]{Colmez-overcon} for more details.
For any subring $A \subset W(\Fr R)$, denote
$$A^{\dag, r}: =A \cap W(\Fr R) ^{\dag, r}.$$

Recall that for a finite free $\Z_p$-representation $T$ of $G_K$, we can associate the $(\varphi, \Gamma)$-module via:
$$\underline{\hat{M}}(T): =  (\O _{\widehat \E ^\ur} \otimes_{\Z_p} T) ^{H_K}.$$
Now for any $r>0$, we define
$$ \underline{\hat{M}}^{\dagger, r} (T): =(\O _{\widehat \E ^\ur}^{\dagger, r} \otimes_{\Z_p} T) ^{H_K}.$$

\begin{definition}\label{Def-OC}
For a finite free $\Z_p$-representation $T$ of $G_K$, its associated $(\varphi, \Gamma)$-module is called overconvergent if there exists $r > 0$ such that
$$ \underline{\hat{M}}(T)= \O_\E  \otimes_{\O_\E ^{\dag, r}}   \underline{\hat{M}}^{\dagger, r} (T).$$
In particular, if above holds, we say that $\underline{\hat{M}}(T)$ is overconvergent on the interval $(0, r]$.
\end{definition}

The main theorem of \cite{Colmez-overcon} is the following:
\begin{thm}[\cite{Colmez-overcon}] \label{thm main OC}
For any finite free $\Z_p$-representation $T$ of $G_K$, its associated \'etale $(\varphi, \Gamma)$-module is overconvergent.
\end{thm}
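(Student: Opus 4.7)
The plan is to follow the two-step strategy outlined in the introduction: reduce the general case to $K=K_0$, and then exploit the existence of Wach models in the unramified case to build an explicit overconvergent basis.

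First, I would dispose of the ramification. Using Colmez's observation that overconvergence is stable under induction of representations (see \cite[\S III]{Colmez-overcon}), if $K/\Qp$ is finite with maximal unramified subfield $K_0$, it suffices to prove the statement for $G_{K_0}$-representations. Indeed, any $T \in {'}\t{Rep}_{\Z_p}(G_K)$ embeds into $\Ind_{G_K}^{G_{K_0}} T$, which is a $G_{K_0}$-representation; the comparison between $(\varphi,\Gamma)$-modules over $\O_{\E_K}$ and $\O_\E$ translates overconvergence of one into the other, with a controlled change of radius depending only on $e=[K:K_0]$. So from now on I would assume $K=K_0$, the central case where Wach modules are available via Theorem~\ref{thm: WachBerger}.

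The main engine is then the loose crystalline lifting theorem of \cite{GL}: for each $n \geq 1$, the torsion representation $T_n := T/p^n T$ admits a surjection $L_n \onto T_n$ from a $G_K$-stable $\Zp$-lattice $L_n$ in a crystalline representation, with Hodge--Tate weights in a range $\{-r_n, \dots, 0\}$ that can be controlled. By Lemma~\ref{lem model}, the Wach module $\mf{L}_n$ of $L_n$ surjects onto a Wach $\varphi$-model $\mf{L}_n \onto \M_n$ of the étale $\varphi$-module $M_n$ of $T_n$. Running over all such surjections, I would produce the \emph{maximal liftable Wach model} $\M_n^{\tmax}$ of $M_n$ (as in \cite{GL}); the key properties are that $\M_n^{\tmax}$ is canonical, hence $\Gamma_K$-stable (so it upgrades to a Wach model $\hM_n^{\tmax}$ of $\hat M_n$), and that the collection $\{\hM_n^{\tmax}\}_n$ is compatible under the transition maps $T_{n+1}\sto T_n$. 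Passing to the inverse limit, I obtain a $\varphi$- and $\Gamma_K$-stable $\gs$-lattice $\hM^{\tmax}\subset \underline{\hat M}(T)$.

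Next, I would build an overconvergent basis from $\hM^{\tmax}$. Choose a $\gs$-basis $\underline e$ of $\hM^{\tmax}$ and let $A \in \Md(\gs)$ be the matrix of $\varphi$ in this basis; by construction of height-$r_n$ approximations, $A$ is divisible in a controlled way by powers of $q$, so there exists $B\in\Md(\gs)$ with $AB=q^r I_d$ for an explicit $r$ depending on $d,e,f,p$. Following the Kisin-module strategy transported to the Wach setting (via the dictionary in \S\ref{item comparison}, interchanging $E(u)$ with $q$ and $e$ with $p-1$ in the modulo-$p$ formulas), I would show that $\underline e$ in fact lies in $\underline{\hat M}^{\dag,r_0}(T)$ for some explicit $r_0>0$, by iterating the identity $\varphi^n(\underline e)= A \varphi(A)\cdots \varphi^{n-1}(A)\underline e$ and using that $\varphi^n(q)$ has controllable valuation on $W(\FrR)^{\dag,r}$. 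Finally, comparing $\underline{\hat M}(T)$ with $\O_\E \otimes_{\O_\E^{\dag,r_0}} \underline{\hat M}^{\dag, r_0}(T)$ shows they coincide, yielding Definition~\ref{Def-OC} and hence the theorem (with the explicit bound \eqref{eqeq radius1}).

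The hard part will be the two quantitative steps: (i) proving that the maximal liftable Wach model $\hM^{\tmax}$ exists with good control on the Hodge--Tate weights of its crystalline lifts as $n$ varies, which is exactly where the loose crystalline lifting input from \cite{GL} is essential, and (ii) translating $\gs$-module structure into overconvergence on $W(\FrR)^{\dag,r}$. The second step requires carefully tracking how $\varphi$ moves elements inside the overconvergent subrings — in particular, using $\varphi(W(\FrR)^{\dag,r})=W(\FrR)^{\dag,r/p}$ to iterate, and bounding $v_R$ of the resulting matrix entries uniformly. This is where the factor $p^{fed}$ in the denominator of \eqref{eqeq radius1} will appear, coming from the rank and the contraction of radii under Frobenius.
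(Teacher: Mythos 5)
Your reduction to $K=K_0$ via Mackey and \cite[Thm.~II.3.2(i)]{Colmez-overcon} matches the paper's Lemma~\ref{lem unram}, and you have correctly identified loose crystalline lifting and the maximal liftable Wach models $\M_{(n)}\subset M_n$ as the main engine. But the heart of your argument contains a genuine gap: you claim the $\hM_n^{\tmax}$ are compatible under the projections $T_{n+1}\onto T_n$ and that the inverse limit is a finite free, $\varphi$- and $\Gamma_K$-stable $\gs$-lattice $\hM^{\tmax}\subset\underline{\hat M}(T)$ with $\varphi$-matrix $A\in\Md(\gs)$ of bounded $q$-height. If such a lattice existed, $T$ would admit a genuine Wach module and hence, by Theorem~\ref{thm: WachBerger}, would be a lattice in a crystalline representation — impossible since $T$ is arbitrary. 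What actually happens is that $q_{n,n-1}(\M_{(n)})$ is a (typically strictly smaller) liftable submodel of $\M_{(n-1)}$: the models do not stabilize, they drift. Lemma~\ref{lem list}(1) concerns the $p^i$-torsion ($\M_{(j)}[p^i]=\M_{(i)}$), not the projections $\M_{(j,i)}:=q_{j,i}(\M_{(j)})$, and the latter genuinely shrink inside $\M_{(i)}$.

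The paper's key technical input, Proposition~\ref{prop-key}, is precisely a quantitative bound on this drift: a constant $h$ depending only on $p,f,d$ with $u^h\M_{(n-1,1)}\subset\M_{(n,1)}$ uniformly in $n$. Using $h$, Theorem~\ref{thm OC basis} builds a sequence of $\mathcal O_{\mathcal E,n}$-bases of $M_n$ (not $\gs$-bases of $\M_{(n)}$) that are compatible modulo $p^{n-1}$; in the limiting basis the matrix of $\varphi$ lies in $\Md(\gs[\![p/x]\!])$ for an explicit $x$ — a subring of $W(\FrR)^{\dagger,r}$, but strictly larger than $\gs$. Overconvergence is then deduced in Proposition~\ref{thm final OC} by an induction modulo $p^n$ that controls the change-of-basis matrix $X$ relating this basis to a $\Zp$-basis of $T$, using $\varphi(X)=XA$, rather than by iterating $\varphi^n(\underline e)=A\varphi(A)\cdots\varphi^{n-1}(A)\underline e$. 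To repair your argument you would need to replace the (nonexistent) global Wach model by the whole tower of torsion models together with the uniform drift bound $h$; without that input the proof cannot get started.
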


The following lemma says that we can reduce the proof of Theorem \ref{thm main OC} to the case when $K=K_0$.

\begin{lemma} \label{lem unram}
To prove Theorem \ref{thm main OC}, it suffices to prove the cases when $K$ is unramified.
\end{lemma}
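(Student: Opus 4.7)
The plan is to use induction of representations to reduce the general case to the unramified case, following the argument of \cite{Colmez-overcon}. Let $K_0 \subset K$ denote the maximal unramified subextension, with $e = [K:K_0]$. Given any finite free $\Zp$-representation $T$ of $G_K$ of rank $d$, form the induced representation $T' := \Ind_{G_K}^{G_{K_0}} T$, a finite free $\Zp$-representation of $G_{K_0}$ of rank $ed$. Assuming the overconvergence theorem is already known in the unramified case, $\underline{\hat{M}}_{K_0}(T')$ is overconvergent on some interval $(0, r]$.

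Next, I would transfer overconvergence from $T'$, viewed as a $G_{K_0}$-representation, to $T'|_{G_K}$, viewed as a $G_K$-representation. Since $H_K \subset H_{K_0}$ has finite index and $\O_{\E_K}$ is a finite extension of $\O_{\E_{K_0}}$, base change yields
$$\underline{\hat{M}}_K(T'|_{G_K}) \simeq \O_{\E_K} \otimes_{\O_{\E_{K_0}}} \underline{\hat{M}}_{K_0}(T'),$$
together with a parallel identification on the overconvergent side. Tensoring the overconvergence identity for $T'$ over $\O_{\E_{K_0}}$ with $\O_{\E_K}$ then shows that $T'|_{G_K}$ is overconvergent on $(0, r]$ as well.

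Finally, I would descend from $T'|_{G_K}$ to $T$ using Mackey decomposition: $\Res_{G_K}^{G_{K_0}} \Ind_{G_K}^{G_{K_0}} T$ contains $T$ as a direct summand (indexed by the identity coset in $G_{K_0}/G_K$). Since $T \mapsto \underline{\hat{M}}_K(T)$ and its overconvergent counterpart are both additive, overconvergence is inherited by direct summands. Hence $\underline{\hat{M}}_K(T)$ is overconvergent on $(0, r]$.

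The main point requiring care is the middle step, namely verifying that the finite extension $\O_{\E_{K_0}} \hookrightarrow \O_{\E_K}$ interacts well with the overconvergent subrings: that $\O_{\E_K}^{\dag, r}$ is finite over $\O_{\E_{K_0}}^{\dag, r}$ and that restriction of scalars commutes with taking overconvergent parts. This is a standard input that one takes from \cite{Colmez-overcon}, so the whole reduction goes through.
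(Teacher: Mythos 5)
Your proposal is correct and takes essentially the same approach as the paper: induce from $G_K$ to $G_{K_0}$, apply the unramified case, restrict back to $G_K$ (the paper outsources this step to \cite[Thm.\ II.3.2(i)]{Colmez-overcon}, while you sketch the underlying base-change/Galois-descent argument before also citing \cite{Colmez-overcon}), and then use Mackey decomposition plus additivity to extract $T$ as a direct summand.
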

\begin{proof}
Suppose we have already proved Theorem \ref{thm main OC} when the base field is unramified.
Now let $K$ be any field with $[K:K_0]=e$.
Given a finite free $\Z_p$-representation $T$ of $G_K$ of rank $d$, then the induction $\Ind_{G_K}^{G_{K_0}} T$ is a finite free $\Z_p$-representation of $G_{K_0}$ of rank $ed$.
By assumption, $\Ind_{G_K}^{G_{K_0}} T$ is overconvergent as a $G_{K_0}$-representation. By \cite[Thm. II. 3.2.(i)]{Colmez-overcon} $\Res_{G_K}^{G_{K_0}} \Ind_{G_K}^{G_{K_0}} T$ is overconvergent as a $G_{K}$-representation.
By Mackey decomposition, $\Res_{G_K}^{G_{K_0}} \Ind_{G_K}^{G_{K_0}} T$ contains $T$ as a direct summand, and so $T$ has to be overconvergent as a $G_K$-representation too.
Note that if $\Ind_{G_K}^{G_{K_0}} T$ is overconvergent on the interval $(0, r]$ as a $G_{K_0}$-representation, then $T$ is also overconvergent on the interval $(0, r]$ as a $G_{K}$-representation
\end{proof}

\subsection{A reproof when $K/\Qp$ is finite extension} \label{subsec reproof}
The main result of our paper is:
\begin{thm} \label{thm ram OC}
Suppose $K/\Qp$ is a finite extension, and let $f: =[k: \mathbb F_p]$.
Let $T$ be a finite free $\Zp$-representation of $G_K$ of rank $d$.
Then $\underline{\hat{M}}(T)$ is overconvergent on the interval
\begin{equation} \label{eqeq radius}
(0, \frac{1}{21pfe^2d^2p^{fed}}].
\end{equation}
\end{thm}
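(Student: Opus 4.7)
My plan is to follow the strategy sketched in the introduction, which closely parallels \cite{GL} with the substitution $E(u) \leftrightarrow q$ (and, in mod-$p$ calculations, $e \leftrightarrow p-1$) described in \S\ref{item comparison}. The first step is to reduce to the unramified case. By Lemma \ref{lem unram} and the induction argument there, it suffices to prove the analogue of Theorem \ref{thm ram OC} when $K=K_0$ for a representation of rank $d'$, with overconvergence radius $\frac{1}{21pf d'^2 p^{fd'}}$; applying this to $\Ind_{G_K}^{G_{K_0}} T$, which has $\Z_p$-rank $ed$, and transporting overconvergence back through Mackey decomposition recovers exactly the claimed radius $\frac{1}{21pfe^2d^2 p^{fed}}$. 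So from here on I assume $K=K_0$ is unramified.

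The key construction is a \emph{maximal liftable Wach model} of $T_n := T/p^n T$ for each $n\ge 1$. Using the loose crystalline lifting theorem from \cite{GL}, one produces surjections $L \twoheadrightarrow T_n$ from $G_K$-stable lattices $L$ in crystalline representations with Hodge--Tate weights in a controlled range $\{-r,\dots,0\}$ (where $r$ depends only on $p, f, d$). Each such $L$ yields, via Theorem \ref{thm: WachBerger} and Lemma \ref{lem model}, a Wach module $\mathfrak{L}$; pushing forward along the surjection produces a Wach $\varphi$-model $f(\mathfrak{L}) \subset M_n$ of $M_n = \underline{M}(T_n|_{H_K})$. Taking the sum of all such models inside $M_n$ (stable under $\Gamma_K$ since the construction is natural in $G_K$-maps) yields the maximal liftable Wach model $\hat{\mathfrak{M}}_n$ of the $(\varphi,\Gamma)$-module $\hat M_n$. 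Compatibility of these as $n$ varies follows from the usual $p^n$-reduction argument.

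Next I would analyze the matrices of $\varphi$ and of $\gamma \in \Gamma_K$ in a suitably chosen $\gs/p^n$-basis of $\hat{\mathfrak{M}}_n$. By the height condition, the matrix $A$ of $\varphi$ satisfies $AB = q^r I_d$ for some $B$ (so modulo $p$, $AB = u^{(p-1)r} I_d$, the exact analogue of \cite[\S 4]{GL} with $e$ replaced by $p-1$). The $\Gamma$-action matrix $G_\gamma$ satisfies $G_\gamma \equiv I_d \pmod u$ by the Wach-module axiom, and commutation with $\varphi$ together with continuity forces explicit $u$-adic and $p$-adic divisibility bounds on $G_\gamma - I_d$. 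Taking the inverse limit over $n$, these matrices converge in $\Md(\gs_{\Qp})$-entries and, after lifting the basis back to $\hat{\underline{M}}(T)$ through the natural maps, one obtains a candidate basis for an overconvergent lattice.

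The final step is to verify that this basis actually lies in $\hat{\underline{M}}^{\dagger,r}(T)$ for $r = \frac{1}{21pfd^2p^{fd}}$ (with $e=1$); this amounts to checking that in the embedding $\gs \hookrightarrow W(R)$ via $u \mapsto [\underline\varepsilon]-1$, the basis elements have coefficients whose $v_R$-valuations grow linearly fast enough to meet the defining inequality of $W(\Fr R)^{\dagger,r}$. The main obstacle I anticipate is the bookkeeping in this last step: tracking how the $q$-height $r$ (growing with $n$, bounded by the lifting theorem) interacts with $p$-adic precision and $\varphi$-iteration to produce the specific constant $21pfd^2p^{fd}$. The computation parallels \cite[\S 4--5]{GL}, but one must be careful that now $\varphi(u)=(1+u)^p - 1$ rather than $u^p$, so Frobenius does not simply multiply $u$-valuations by $p$; instead one uses $v_R(u)=\tfrac{p}{p-1}$ and the fact that $\varphi$ sends $W(\Fr R)^{\dagger,r}$ to $W(\Fr R)^{\dagger,r/p}$ to propagate the overconvergence estimate along Frobenius iterates, exactly as the substitution rules of \S\ref{item comparison} suggest.
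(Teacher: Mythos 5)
Your overall architecture matches the paper's: reduce to $K=K_0$ via induction and Mackey (Lemma \ref{lem unram}), build maximal liftable Wach $\varphi$-models $\M_{(n)}\subset M_n$ from loose crystalline lifts, produce a compatible basis in the limit, and check overconvergence of the transition matrix against a basis of $T$. But the proposal skips the single most important technical lemma of the paper, and as a result the middle of your argument does not actually close.

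First, a concrete error: you assert that the loose crystalline lifts $L\twoheadrightarrow T_n$ have Hodge--Tate weights in $\{-r,\dots,0\}$ with $r$ depending only on $p,f,d$. This is false: in the inductive construction of Theorem \ref{thm loose lifts}, each step twists by $\varepsilon_p^{-s}$ with $s=p^{fd}+p-2$, so the heights of $\M_{(n)}$ grow roughly linearly in $n$. Consequently the naive height bound $AB=q^rI_d$ degenerates as $n\to\infty$ and cannot by itself control the $u$-denominators of the transition matrices.

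What actually makes the argument work is Proposition \ref{prop-key}: there is a \emph{uniform} constant $h=3fd^2p^{fd}$, depending only on $p,f,d$ (not on $n$), such that $u^h\M_{(n-1,1)}\subset\M_{(n,1)}$, hence $u^h\M_{(n)}^{i-1,i}\subset\M_{(n)}^{i,i+1}$ for all $i$. This is proved by a determinant-valuation invariant $\alpha(\M)=v_R(\det\varphi)$, its additivity on short exact sequences, and the identity $\alpha(\L/p\L)=(p-1)\sum(-\text{HT weights})$ from Wach theory (\cite[III.3.1]{Ber}), applied to the explicit diagrams in the loose-lifting construction. Your proposal replaces this with the vague claim that ``commutation with $\varphi$ together with continuity forces explicit $u$-adic and $p$-adic divisibility bounds on $G_\gamma-I_d$''; commutation and continuity alone give no such quantitative control, and in particular cannot produce a bound independent of $n$. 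Without Proposition \ref{prop-key}, the matrices $Y_n$ used to pass from the generators $\e_{(n),j}^{(i)}$ of $\M_{(n)}$ to an $\O_{\E,n}$-basis of $M_n$ acquire $u$-denominators that could grow uncontrollably, and the candidate basis $(e_j)=\lim_n(e_j^{(n)})$ need not be overconvergent. The remainder of the paper's proof (Theorem \ref{thm OC basis} giving $A\in\Md(\gs[\![p/x]\!])$ with $x=u^{4h}\varphi(u)^{3h}$, and Proposition \ref{thm final OC} with the inductive contraction argument) quantitatively depends on this $h$ and ultimately yields $\beta\le 7ph=21pfd^2p^{fd}$. You should insert the $h$-bound as a separate lemma and run the $\alpha$-invariant computation before you can assemble the overconvergent basis.
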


By Lemma \ref{lem unram}, it suffices to prove Thm. \ref{thm ram OC} when $K=K_0$.
Then the main input will be Theorem \ref{thm OC basis} (which is analogue of \cite[Thm. 6.1.1]{GL}), which says that there exists an ``overconvergent basis" of $\underline{\hat{M}}(T)$, with respect to which all entries of matrices for $\varphi$  and $\gamma \in \Gamma_K$ are overconvergent elements.

\begin{remark}
\begin{enumerate}
  \item \label{item1} As we can see from \eqref{eqeq radius}, we have obtained a ``uniform" lower bound of overconvergence radius for \emph{all} $p$-adic Galois representations, once we fix $p, e, f, d$.
It is very possible that this uniform bound can have applications to situations where we consider a ``family" of Galois representations (where $p, e, f, d$ are naturally fixed).

\item However, let us also mention that it seems almost impossible to use methods in this paper to reprove the overconvergence property of the family of $(\varphi, \Gamma)$-modules attached to a family of Galois representations (as in the setting of \cite{BC08}); indeed, it seems impossible to construct and study loose crystalline liftings of a family of residual representations. (This does not contradict with the potential usefulness mentioned in Item \ref{item1} above.)

  \item Actually, we only need to prove the $K=\Qp$ case to deduce the full Theorem \ref{thm ram OC}. So for example, we can consider inductions of the form $\Ind_{G_K}^{G_{\Qp}} T$. This will not save much trouble for us (besides, we still use some finite unramified extensions in e.g. Lemma \ref{lem-max-descent}).

\end{enumerate}

\end{remark}


\section{Torsion Wach models} \label{sec Wach}
In this section, we always assume $K/\Qp$ is a finite extension and $K=K_0$.
We study liftable Wach $\varphi$-models in torsion \'etale $\varphi$-modules.

\subsection{Maximal Wach models and devissage}
For $n \in \mathbb Z^{>0}$, suppose $T_n$ is a $p^n$-torsion representation of $G_K$ ($T_n$ is not necessarily free over $\mathbb Z/p^n\mathbb Z$). Let $M_n$ be the \'etale $\varphi$-module corresponding to $T_n |_{H_K}$. Recall that in the torsion case, a Wach $\varphi$-module $\M \subset M_n $ is called a \emph{Wach $\varphi$-model} if $\M[\frac 1 u] = M_n$.

\begin{defn}
  A Wach $\varphi$-model $\M$ is called \emph{loosely liftable} (in short, liftable), if there exists a $G_{K}$-stable $\Z_p$-lattice $L$ inside a crystalline representation and surjective map $f : L \sto T_n$ such that the corresponding map of \'etale $\varphi$-modules $f : \mathcal L \to  M_n$  satisfies $f(\L)=\M$, where $\mathcal L$ and $\L$ are the \'etale $\varphi$-modules and Wach modules for $L$ (see the discussion before Example \ref{ex-cyclotomic}).
In this case, we say that $\M$ can be \emph{realized} by the surjection $f : L \sto T_n$.
\end{defn}

\begin{defn}\label{defbig}
Let $E$ be a finite extension of $\Q_p$, $\O_E$ its ring of integers, $\varpi_E$ a uniformizer, $\mathfrak m_E$ the maximal ideal and $k_E= \O_E/ \mathfrak m_E$ the residue field.
\begin{enumerate}
  \item For $\bar \rho: G_K \to \GL_d(k_E)$ a torsion representation, we say that a continuous representation $r : G_K \to \GL _d (\O_E)$ is a \emph{strict} $\O_E$-lift of $\bar \rho$ if $r(\bmod \mathfrak m_E) \simeq \bar \rho$.

\item For a $p$-torsion representation of the form $\bar \rho: G_K \to \GL_d(k_E)$, we say that ``$E$ is big enough" (for $\bar\rho$) if each direct summand of $\bar \rho^{\textnormal{ss}}$ has strict crystalline $\O_E$-lift with non-positive Hodge-Tate weights.
\end{enumerate}
\end{defn}

\begin{thm}\label{thm loose lifts}
Suppose $T_n$ is a $p^n$-torsion representation of $G_K$ ($T_n$ is not necessarily free over $\mathbb Z/p^n\mathbb Z$), then it admits loose crystalline lifts, i.e., there exists a $G_{K}$-stable $\Z_p$-lattice $L$ inside a crystalline representation and surjective map $f : L \sto T_n$.
Furthermore, if we let $T_1:=T_n/pT_n$ and suppose $E$ is big enough for $T_1\otimes_{\Fp}k_E$, then we can always make the loose crystalline lift to be finite free over $\O_E$.
\end{thm}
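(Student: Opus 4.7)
The plan is to deduce the theorem directly from the loose crystalline lifting results of \cite{GL}, which we are explicitly allowed to invoke precisely because $[K:\Q_p]<\infty$. The result is essentially a packaging of those results tailored to the needs of this paper, rather than something that requires a new argument.

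For the first assertion, I would simply apply the main loose crystalline lifting theorem of \cite{GL} to the torsion representation $T_n$. That theorem produces, for any finite $p$-power-torsion $G_K$-representation, a $G_K$-stable $\Z_p$-lattice $L$ inside a crystalline representation together with a $\Z_p[G_K]$-surjection $f\colon L\twoheadrightarrow T_n$, which is exactly what is being asked for. There is no genuine content to add beyond citing the statement.

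For the refined ($\O_E$-free) assertion, I would run the construction of the loose lift from \cite{GL} but carry all coefficient data as $\O_E$-modules. The hypothesis that $E$ is big enough for $T_1\otimes_{\F_p}k_E$ guarantees that every irreducible constituent of $T_1^{\rm ss}\otimes k_E$ admits a strict crystalline $\O_E$-lift with non-positive Hodge-Tate weights, and these strict lifts are the seed data fed into the devissage. I would then proceed in two stages. First, reduce to the case $n=1$ by induction using the short exact sequence $0\to pT_n\to T_n\to T_1\to 0$, where $pT_n$ is $p^{n-1}$-torsion: an $\O_E$-free loose crystalline lift of $pT_n$ combined with one of $T_1$ assembles into an $\O_E$-free loose crystalline lift of $T_n$ (after replacing the naive direct sum by a suitable pullback to account for the extension class, which in the loose setting can always be arranged by allowing Hodge-Tate weights to grow). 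Second, in the residual case $n=1$, I would build the surjection by an inner induction on the length of a composition series of $T_1\otimes k_E$, at each step using the strict $\O_E$-lift of the new simple constituent together with the freedom to twist by a power of $\varepsilon_p$ to accommodate the extension. Finally, after obtaining an $\O_E$-free lift onto $T_1\otimes k_E$, I would restrict scalars to $\Z_p$ and post-compose with an $\F_p$-linear projection $T_1\otimes_{\F_p}k_E\twoheadrightarrow T_1$ to land on $T_1$ itself.

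The step I expect to be the main point of care is the compatibility of the devissage in \cite{GL} with the $\O_E$-structure throughout -- specifically, verifying that at each stage the extension of $\O_E$-free crystalline representations can be realized by an $\O_E$-free crystalline representation, and that the Hodge-Tate-weight growth needed to trivialize the obstructions does not force one to leave the category of $\O_E$-lifts. I do not expect genuine trouble here, since \cite{GL} is written with essentially this usage in mind, but it is the one place where one must be attentive rather than simply quote a black box.
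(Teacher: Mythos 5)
The high-level plan---quote the loose crystalline lifting results of \cite{GL} and run the devissage carrying $\O_E$-module structure throughout---is exactly what the paper does, and the first assertion is indeed just a citation of \cite[Thm. 3.3.2]{GL}. However, the devissage you describe for the inductive part differs from the one in the paper and in \cite{GL}, and the difference is not a harmless reparametrization. You split along $0 \to pT_n \to T_n \to T_1 \to 0$, putting the $p$-torsion object $T_1$ on top as the quotient and the $p^{n-1}$-torsion piece $pT_n$ at the bottom. The paper instead splits along $0 \to p^{n-1}T_n \to T_n \to T_{n-1} \to 0$, putting the $p$-torsion piece at the bottom.

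This choice is load-bearing. In the paper's devissage, the cartesian product $\breve W_n := \wt L_{n-1} \times_{T_{n-1}} T_n$ has the crucial feature that multiplication by $p$ gives a $G$-isomorphism $p\breve W_n \simeq \wt L_{n-1}$ (because $\breve W_n[p] = p^{n-1}T_n$ equals the kernel of $\breve W_n \twoheadrightarrow \wt L_{n-1}$), hence $p\breve W_n$ is already an $\O_E$-free crystalline lattice. That is exactly what makes $\wt L_n' := \wt Z_n' \times_{Z_n} \breve W_n$, the pullback along a strict $\O_E$-lift $\wt Z_n'$ of the $p$-torsion object $Z_n := \breve W_n/p\breve W_n$, again $\O_E$-free and crystalline. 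With your devissage one would form $\breve W' := \wt L_1 \times_{T_1} T_n$, whose kernel over $\wt L_1$ is the $p^{n-1}$-torsion module $pT_n$; then $p\breve W'$ is an extension of $\wt L_1$ by $p^2T_n$, which still has torsion as soon as $n \ge 3$, so the analogous pullback fails to be $\O_E$-free and the argument does not close. You also omit the unramified base change which is essential in \cite[Thm. 3.2.1]{GL}: the strict $\O_E$-lifts of simple constituents and of extensions (``Statement B'') are only produced after restriction to $G_{K'}$ for a suitable finite unramified $K'/K$, and the loose lift over $G_K$ is then obtained by inducing back via $\Ind_{G_{K'}}^{G_K}$; without this step the construction does not produce a $G_K$-representation. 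The cleanest repair is to switch to the paper's devissage with $T_{n-1}$ as the quotient and reproduce the $\breve W_n$, $Z_n$, and induction steps.
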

\begin{proof}
This is \cite[Thm 3.3.2]{GL}; let us sketch the proof here. We hope it can serve as a quick guide for readers not familiar with the proof of \emph{loc. cit.}; also, we will use many of the (intermediate) results later in \S \ref{subsec h}.

\textbf{Step 1: $n=1$ case.} Let $E$ be the degree $d$ unramified extension of $K_0$, where $d=\dim_{\Fp} T_1$; let $\mathcal{O}_E$ be the ring of integers and let $k_E$ be the residue field.
\begin{itemize}[leftmargin=0cm]
\item By \cite[Lem. 3.1.3]{GL}, $E$ is  big enough for $T_1\otimes_{\Fp} k_E$ (cf. Def. \ref{defbig}).


\item Then \cite[Thm. 3.2.1]{GL} shows that there exists a finite unramified extension ${K'}$ of $K$, such that the restriction of $T_1\otimes_{\Fp} k_E$ to ${G_{{K'}}}$      admits a strict crystalline $\O_E$-lift $\rho'$. Then $L =\Ind^{G_K}_{G_{{K'}}} \rho'$ is a loose crystalline lift of $T_1$.
\end{itemize}

\textbf{Step 2: induction argument.}
Now suppose our theorem is true for $n-1$, and consider the case $n$.
\begin{itemize}[leftmargin=0cm]
\item  Denote $T_{n-1}:=T_n/p^{n-1}T_n$. By induction hypothesis, let $f_{n-1} : \widetilde L_{n-1} \onto T_{n-1}$ be a loose crystalline lift.
Let $\breve W_{n}$ be the cartesian product of $\widetilde L_{n-1} \onto T_{n-1}$ and $T_n \onto T_{n-1}$. We have the following diagram of short exact sequences (of $\Zp[G_K]$-modules).
\begin{equation}\label{Diag-0}
\begin{split}
\xymatrix{ 0 \ar[r] & p^{n-1} T_{n}\ar[d] \ar[r] & \breve W_{n} \ar@{->>}[d]\ar[r] & \widetilde L_{n -1}\ar@{->>}[d] \ar[r] & 0 \\
0 \ar[r] & p^{n-1} T_{n} \ar[r] & T_{n} \ar[r] & T_{n-1} \ar[r] & 0}
\end{split}
\end{equation}
Let $Z_{n}: = \breve W_{n}/ p \breve W_{n}$, then one readily checks that the following is  short exact:
\begin{equation}\label{Eq-exactsq-Z-n}
 0 \to p ^{n-1} T_n \to Z_{n} \to \widetilde L_{n-1}/ p \widetilde L_{n-1} \to 0.
\end{equation}

\item Let $E$ be the ``big enough" field for the $p$-torsion representation $ T_n/pT_n$ as in Step 1. 
Since $T_n/pT_n \onto p ^{n -1} T_{n}$, $E$ is also big enough for $p ^{n -1} T_{n}\otimes_{\Fp}k_E$.
By Step 1, there exists $K''/K$ finite unramified such that there exists a strict crystalline $\O_E$-lift
$\widetilde N_{n}''  \onto p ^{n -1} T_{n}\otimes_{\Fp}k_E|_{G_{K''}} $. Also, $\widetilde L_{n-1}$ is a strict crystalline $\O_E$-lift of $\widetilde L_{n-1}/\varpi_E\widetilde L_{n-1}$.
 Via (Statement B) in the proof of \cite[Thm. 3.2.1]{GL} (which constructs extensions of strict crystalline lifts), we can find some finite unramified extension $K'/K''$ such that we have the following diagram of short exact sequences of $G_{K'}$-representations:
\begin{equation}\label{Diag-2 first apperance}
\begin{split}
\xymatrix{0 \ar[r] &  \widetilde N_{n}' (:=\widetilde N_{n}''|_{G_{K'}})   \ar@{->>}[d] \ar[r] &   \widetilde Z_{n}'  \ar@{->>}[d]\ar[r] & \widetilde L_{n-1}(-s)  \ar@{->>}[d]\ar[r]  & 0 \\
0 \ar[r] & p ^{n -1} T_{n}\otimes_{\Fp}k_E \ar@{->>}[d]\ar[r] & Z_{n}\otimes_{\Fp}k_E    \ar@{->>}[d]\ar[r] & \widetilde L_{n-1}/ p\widetilde L_{n-1}\otimes_{\Fp}k_E   \ar@{->>}[d]\ar[r] & 0\\
0 \ar[r] & p ^{n -1} T_{n}\ar[r] & Z_{n}   \ar[r] & \widetilde L_{n-1}/ p\widetilde L_{n-1}  \ar[r] & 0  ,}
\end{split}
\end{equation}
where the first row are strict crystalline $\O_E$-lifts of the second row, and the maps from the second row to the third row are compatible projections to chosen $\Fp$-direct summands. Here we can easily compute that we can choose
\begin{equation}\label{eqnss}
s:= p^{fd}+p-2
\end{equation}
where $d=\dim_{\Fp} T_n/pT_n$.

\item  Define $ \widetilde L_{n}': =  \widetilde Z_{n}' \times_{Z_n} \breve W_{n} $, where $\widetilde Z_{n}' \to {Z_n}$ comes from diagram \eqref{Diag-2 first apperance}.
The $G_{K'}$-representation $ \widetilde L_{n}'$ sits in the following diagram of short exact sequences:
\begin{equation}\label{Diag-1-NEW}
\begin{split}
\xymatrix{ 0 \ar[r] &  p \widetilde L_{n-1}|_{G_{K'}}\ar[d] \ar[r] & \widetilde L_{n}' \ar@{->>}[d]\ar[r] & \widetilde Z_{n}'\ar@{->>}[d] \ar[r] & 0 \\   0 \ar[r] & p \widetilde L_{n-1}|_{G_{K'}} \ar[r] & \breve W_{n}|_{G_{K'}} \ar[r] & Z_{n}|_{G_{K'}} \ar[r] & 0}
\end{split}
\end{equation}
One easily sees that $\widetilde L_{n}'$ is $\mathcal O_E$-finite free and crystalline. Furthermore $\widetilde L_{n}'$ maps surjectively onto $\breve W_n|_{G_{K'}}$, and so onto $T_n |_{G_{K'}}$. Finally, $\widetilde L_{n}:= \Ind_{G_{K'}}^{G_K} \widetilde L_{n}' $ is the desired loose crystalline lift of $T_n$.
\end{itemize}

\end{proof}

\begin{lemma} \label{lem-liftexist} \begin{enumerate}
\item For any $M_n$ (coming from $T_n |_{H_K}$), a liftable model $\M \subset M_n$ exists.
\item The set of liftable models inside $M_n$ has a unique maximal object (with respect to the obvious inclusion relation). (We denote the maximal liftable model as $\M_{(n)}$).
\end{enumerate}
\end{lemma}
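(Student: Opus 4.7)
The plan is to treat the two items in turn.

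For part (1), existence follows immediately from Theorem \ref{thm loose lifts}: it produces a loose crystalline lift $f : L \onto T_n$, and by Lemma \ref{lem model} the Wach $\varphi$-module $\L$ attached to $L$ is a Wach $\varphi$-model of the associated \'etale $\varphi$-module $\mathcal L$. The image $\M := f(\L) \subset M_n$ is then a liftable Wach $\varphi$-model by construction.

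For part (2), the main algebraic step is to show that the set of liftable models is closed under finite sums. Given liftable $\M_i$ realized by $f_i : L_i \onto T_n$ for $i = 1, 2$, I would form the map $g : L_1 \oplus L_2 \onto T_n$ defined by $(x_1, x_2) \mapsto f_1(x_1) + f_2(x_2)$, which is surjective already from the surjectivity of $f_1$. The source $L_1 \oplus L_2$ is a $G_K$-stable $\Zp$-lattice in the crystalline representation $(L_1 \oplus L_2)[1/p]$, with Wach $\varphi$-module $\L_1 \oplus \L_2$ by functoriality of Theorem \ref{thm: WachBerger}. The induced map of \'etale $\varphi$-modules then sends $\L_1 \oplus \L_2$ onto $f_1(\L_1) + f_2(\L_2) = \M_1 + \M_2$, exhibiting $\M_1 + \M_2$ as liftable.

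With sum-closure in hand, the natural candidate for the maximum is $\M_{(n)} := \sum_\alpha \M_\alpha$ ranging over all liftable $\M_\alpha \subset M_n$, and uniqueness is then automatic. What remains is to verify that this sum is finitely generated over $\gs$, equivalently that all liftable models sit inside a single $\gs$-finitely generated submodule $\mathcal N \subset M_n$. Granting such $\mathcal N$, Noetherianness of $\gs/p^n$ on $\mathcal N$ realizes $\M_{(n)}$ as a finite sum $\M_{\alpha_1} + \cdots + \M_{\alpha_k}$ of liftable models, which is liftable by iteration of sum-closure, hence the desired maximum.

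The main obstacle is precisely exhibiting this uniform $\gs$-finite bound $\mathcal N$, since a priori the $q$-height of a liftable model varies with the chosen crystalline lift, and the height in principle governs how deeply $\M$ can extend inside $M_n$ in the $u$-direction. The plan to handle this is to use the $\varphi$-structure of $\M$: for a $p^n$-torsion Wach $\varphi$-module of $q$-height $r$ with matrix $A$ for $\varphi$, there is a companion matrix $B$ with $AB = q^r$ in the sense of \S \ref{item comparison}, so iterating $\varphi$ and combining with the Noetherian structure of $M_n$ as an $\O_\E/p^n$-module should constrain all liftable models inside a fixed $\gs$-finitely generated submodule depending only on $M_n$.
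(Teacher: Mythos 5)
Part (1) and the sum-closure step of part (2) are fine and track the paper's proof (the paper phrases sum-closure as ``$\textnormal{LF}_{\gs}^{\infty}(M_n)$ admits finite supremum''; your $L_1\oplus L_2$ construction is exactly why). The place where your argument falls short is the last step, and you flag it yourself: you need that all liftable models lie inside a single $\gs$-finitely generated $\mathcal N\subset M_n$ (equivalently, a bound on ascending chain lengths), and what you offer there is a plan (``should constrain'') rather than a proof. That is a genuine gap, not a minor omission, since without it you cannot realize $\M_{(n)}$ as a \emph{finite} sum of liftable models and the whole Noetherian reduction collapses.

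The paper closes precisely this gap by invoking \cite[Lem.~3.2.4]{liu-car1} (via \cite[Lem.~4.1.2]{GL}), transported to the Wach setting by the dictionary of \S\ref{item comparison}, i.e.\ replace $E(u)$ by $q$ and $e$ by $p-1$. Two remarks on your sketch. First, the obstacle is not as dangerous as you fear: the uniform bound does not need liftability at all --- it is a fact about \emph{all} Wach $\varphi$-models of a fixed $M_n$. Once one model $\M_0$ (say the one from part (1)) is fixed, with $q$-height $r_0$, the requirement that $\varphi$ preserve any other model $\M$ together with the \'etale condition forces $\M\subset u^{-N}\M_0$ for an $N$ depending only on $r_0$ and the rank (in rank one and mod $p$, if $\M=u^{-a}\M_0$ then the $\varphi$-multiplier of $\M$ is $q^{-a}$ times that of $\M_0$, which must still lie in $k[[u]]$, giving $a\le r_0$; the higher-rank statement is the content of \cite[Lem.~3.2.4]{liu-car1}). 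So the variable $q$-height of the liftable model is a red herring; what matters is the height of the reference model. Second, your companion-matrix remark ($AB=q^r$ with $r$ the height of $\M$) points in the right direction but cannot on its own produce a bound independent of $\M$, precisely because $r$ is not a priori bounded over liftable models; you have to run the argument relative to a fixed $\M_0$ as above. As written, then, your proof is incomplete at exactly the step the paper outsources to \cite[Lem.~3.2.4]{liu-car1}, and you should either cite that lemma (adapted as the paper indicates) or supply the module-theoretic bound yourself.
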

\begin{proof}
 Item (1) follows from Thm. \ref{thm loose lifts}.
Item (2) follows from similar argument as in \cite[Lem. 4.1.2]{GL}; let us sketch the argument for the reader's convenience. Denote the set of all liftable models inside $M_n$ as $\textnormal{LF}_{\gs}^{\infty}(M_n)$. It is trivial to see that $\textnormal{LF}_{\gs}^{\infty}(M_n)$ admits finite supremum. Then it suffices to show that there is an upper bound for length of any chain in $\textnormal{LF}_{\gs}^{\infty}(M_n)$; this is the content of  \cite[Lem. 3.2.4]{liu-car1}, which we need to reprove in our Wach $\varphi$-module setting. The proof is verbatim, if we change all $E(u)$ (resp. $e$) in \textit{loc. cit.} to $q$ (resp. $p-1$) (cf. \S \ref{item comparison}).
\end{proof}

\subsubsection{} \label{notation free}
Now let us introduce some notations.
Let $T$ be a $\Zp$-finite free $G_K$-representation, and set $T_n : = T/p ^n T$.
Let $M$ be the finite free \'etale $\varphi$-module corresponding to $T$, and set $M_n:=M/p^n M$.
Denote the natural projection $q_{j ,i } : M_j \onto  M_i$ for $i < j$ induced by modulo $p^i$.
Recall that we use $\M_{(j)}$ to denote the maximal liftable Wach $\varphi$-model of $M_j$.
Set $\M _{(j , i ) } = q_{j ,i} (\M_{(j)})$.

For $i<j$, we denote $\iota_{i, j}: M_i \to M_j$ the injective map where for $x \in M_i$, we choose any lift $\hat x \in M_j$, and let $\iota_{i, j}(x) =p^{j-i} \hat x$. This is clearly well-defined, and we will use it to identify $M_i$ with $\iota_{i, j}(M_i)= M_j[p^i]=p^{j-i}M_j$ (recall that the notation $M_j[p^i]$ denotes the $p^i$-torsion elements). The maps $\iota_{i, j}$ are clearly transitive; namely, $\iota_{i, j}\circ \iota_{j, k}=\iota_{i, k}$. Also, the composite
$$ M_i \overset{\iota_{i, j}}{\longrightarrow}  M_j \overset{q_{j, i}}{\longrightarrow}   M_i $$ is precisely the map $ M_i \overset{\times p^{j-i}}{\longrightarrow} M_i$.

Let $\M \in {'\sfi}$ be a torsion Wach $\varphi$-module such that $\M[\frac 1 u]=M_n$.
Following the discussion above \cite[Lem. 4.2.4]{liu2}, for each $0 \le i < j \le n$, we define
$$ \M ^{i, j}:= \t{Ker}  (p ^i \M \overset{p ^{j-i}}{\longrightarrow} p ^ j \M ).$$

\begin{lemma} \label{lem list}
We use the above notations. In particular, for $i<j$, we identify $M_i$ with $M_j[p^i]$. Then we have:
\begin{enumerate}
  \item $\m_{(j)}[p^i] = \m_{(i)}$ as Wach $\varphi$-models of $M_i$.
  \item We have $ \m_{(j)}^{i-1, i} = \m_{(i, 1)}  $ for $j\ge i$.
\end{enumerate}
\end{lemma}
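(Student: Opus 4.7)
The plan is to prove (1) by a two-sided inclusion argument inside $M_j$, and to deduce (2) from (1) by an essentially formal calculation. Under the identification $\iota_{i,j}$, the content of (1) is the equality $\iota_{i,j}(\M_{(i)}) = \M_{(j)}[p^i]$ as submodules of $M_j$.

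For the easy inclusion $\iota_{i,j}(\M_{(i)}) \subset \M_{(j)}[p^i]$, I will fix realizations $f\colon L \onto T_i$ of $\M_{(i)}$ and $g\colon N \onto T_j$ of $\M_{(j)}$, and consider the crystalline lift $h\colon L \oplus N \onto T_j$ defined by $h(l, n) := \iota_{i,j}(f(l)) + g(n)$. On Wach modules, the image $h(\mathfrak L \oplus \mathfrak N) = \iota_{i,j}(\M_{(i)}) + \M_{(j)}$ is a liftable model of $M_j$, and maximality of $\M_{(j)}$ forces it to lie in $\M_{(j)}$. Since $\iota_{i,j}(\M_{(i)})$ is already $p^i$-torsion, this gives the claim.

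The heart of the reverse inclusion will be to show $\M_{(j)}[p^i] = p^{j-i}\M_{(j)}$ and then to exhibit this as a realizable model of $M_i$. Given $y \in \M_{(j)}[p^i]$, write $y = g(m)$ with $m \in \mathfrak N$; then $p^i y = 0$ forces $p^i m \in \ker(g\colon \mathcal N \to M_j) = p^j \mathcal N$, and $p$-torsion-freeness of $\mathcal N$ gives $m \in p^{j-i}\mathcal N$. Using that $\mathfrak N$ is $\gs$-free with $\mathcal N = \mathfrak N \otimes_\gs \O_\E$ and that $\gs/p^l \hookrightarrow \O_\E/p^l$, one obtains $\mathfrak N/p^l\mathfrak N \hookrightarrow \mathcal N/p^l\mathcal N$, equivalently $\mathfrak N \cap p^l\mathcal N = p^l \mathfrak N$; applied with $l = j-i$ this yields $m \in p^{j-i}\mathfrak N$, so $y \in p^{j-i}\M_{(j)}$. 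Now the sublattice $p^{j-i}N \subset N$ is crystalline, its Wach module is $p^{j-i}\mathfrak N$, and $g|_{p^{j-i}N}$ surjects onto $p^{j-i}T_j = T_j[p^i] \cong T_i$; hence the model of $M_i$ realized by this lift is $\iota_{i,j}^{-1}(p^{j-i}\M_{(j)}) = \iota_{i,j}^{-1}(\M_{(j)}[p^i])$, and maximality of $\M_{(i)}$ delivers the desired inclusion.

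For (2), unwinding definitions shows $\M_{(j)}^{i-1, i} = p^{i-1}(\M_{(j)}[p^i])$; combining this with (1) and the $\Z_p$-linearity of $\iota_{i,j}$ rewrites it as $\iota_{i,j}(p^{i-1}\M_{(i)})$. Multiplication by $p^{i-1}$ on $M_i$ factors as $M_i \xrightarrow{q_{i,1}} M_1 \xrightarrow{\iota_{1,i}} M_i$, so $p^{i-1}\M_{(i)} = \iota_{1,i}(\M_{(i,1)})$; transitivity $\iota_{i,j}\circ \iota_{1,i} = \iota_{1,j}$ then yields $\M_{(j)}^{i-1,i} = \iota_{1,j}(\M_{(i,1)})$, which under our identifications is exactly $\M_{(i,1)}$. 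The main obstacle throughout is the intersection property $\mathfrak N \cap p^l \mathcal N = p^l \mathfrak N$; once that is in hand, the rest is essentially bookkeeping with maximality.
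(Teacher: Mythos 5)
Your argument for the inclusion $\iota_{i,j}(\m_{(i)}) \subset \m_{(j)}[p^i]$ (direct-sum lift plus maximality of $\m_{(j)}$) is fine and is essentially the paper's argument, and your deduction of (2) from (1) via $\m_{(j)}^{i-1,i} = p^{i-1}(\m_{(j)}[p^i])$ is also correct. The gap is in the reverse inclusion.

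The step that fails is the assertion $\ker\bigl(g\colon \mathcal N \to M_j\bigr) = p^j \mathcal N$. That identity would hold only for a \emph{strict} lift, i.e.\ $N \onto T_j$ with kernel $p^j N$ and $\rank_{\Zp} N = d$. But $N$ here is a \emph{loose} crystalline lift, and by the construction in Theorem~\ref{thm loose lifts} (inductions from finite unramified extensions, $\O_E$-coefficients, fibre products) its $\Zp$-rank is typically strictly larger than $d$. Then $\ker(N \onto T_j)$ is a full-rank $\Zp$-lattice strictly containing $p^j N$, and by exactness of the \'etale $\varphi$-module functor $\ker\bigl(g\colon \mathcal N \to M_j\bigr)$ is the \'etale $\varphi$-module of that kernel, hence strictly larger than $p^j\mathcal N$ (in an elementary-divisor basis it is $p^j\O_\E^{\,d}\oplus \O_\E^{\,\rank N - d}$). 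Consequently $p^i m \in \ker g$ does \emph{not} give $m \in p^{j-i}\mathcal N$, and the chain of deductions leading to $\m_{(j)}[p^i]\subset p^{j-i}\m_{(j)}$ collapses. Your intersection fact $\mathfrak N \cap p^{l}\mathcal N = p^{l}\mathfrak N$ is correct in isolation, but it is invoked only after the faulty step.

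The paper sidesteps the equality $\m_{(j)}[p^i]=p^{j-i}\m_{(j)}$ entirely. Starting from $f\colon L\onto T_j$ realizing $\m_{(j)}$, it sets $K:=\ker\bigl(L\onto T_j \xrightarrow{\times p^i} p^iT_j\bigr)=f^{-1}(T_j[p^i])$; this $K$ is again a full-rank $\Zp$-lattice in the same crystalline $\Qp$-representation, it surjects onto $T_j[p^i]\simeq T_i$, and one shows that the liftable model of $M_i$ it realizes is precisely $\m_{(j)}[p^i]$, so maximality of $\m_{(i)}$ gives $\m_{(j)}[p^i]\subset \m_{(i)}$. Your choice $p^{j-i}N$ is (when $\rank N>d$) a proper sublattice of this $K$, and it realizes only the a priori smaller model $p^{j-i}\m_{(j)}$; to conclude along your route you would still need the unproved containment $\m_{(j)}[p^i]\subset p^{j-i}\m_{(j)}$. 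Replacing $p^{j-i}N$ by the paper's $K=f^{-1}(T_j[p^i])$, and proving directly that $f(\mathfrak K)=\m_{(j)}[p^i]$ (this is where a saturation/intersection argument genuinely enters, as in \cite[Lem.~4.1.5]{GL}), repairs the proof.
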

\begin{proof}
The proof is strictly verbatim as in \cite[Lem. 4.1.5, Cor. 4.1.6]{GL}; let us sketch the main ideas. Suppose $f: L \onto T_j$ realizes $\m_{(j)}$. Let $g: L \onto T_j \onto p^iT_j$ be the composite map where the second map is the $\times p^i$ map, and let $K: =\Ker g$. Then the induced loose crystalline lifting $K \onto T_j[p^i]$ realizes $\m_{(j)}[p^i]$, and thus $\m_{(j)}[p^i] \subset \m_{(i)}$. For the other direction, note that $\m_{(i)}+\m_{(j)}$ is a liftable model in $M_j$. So we have $\m_{(i)}\subset \m_{(j)}$, and so $\m_{(i)}\subset \m_{(j)}[p^i]$.

For Item (2), note $ \m_{(j)}^{i-1, i}  = p^{i-1}(\m_{(j)}[p^i])$, which equals to $p^{i-1}\m_{(i)}$ (=$ \m_{(i, 1)}$) by Item (1).
\end{proof}

\subsubsection{} \label{subsubsec notation descent}
Suppose $T_n$ is a $p^n$-torsion representation of $G_K$, $M_n$ the corresponding \'etale $\varphi$-module.
Suppose $K'$ is a finite unramified extension of $K$, with residue field $k'$ and ring of integers $W(k')$.
Then it is easy to see that $M'_n  := W(k ') \otimes _{W(k)} M_n $ is the corresponding \'etale $\varphi$-module for $T_n|_{G_{K'}}$.

\begin{lemma}\label{lem-max-descent}
Use notations as in \ref{subsubsec notation descent}, and use $\M'_{(n)}$ to denote the maximal liftable model of $M'_n$.
Then we have
$\M'_{(n)} \simeq W(k') \otimes_{W(k)} \M_{(n)}$.
\end{lemma}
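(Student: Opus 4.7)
The plan is to establish the two inclusions $W(k') \otimes_{W(k)} \M_{(n)} \subset \M'_{(n)}$ and $\M'_{(n)} \subset W(k') \otimes_{W(k)} \M_{(n)}$ separately, using unramified base change for Wach modules together with a trace-pairing argument.

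For the first inclusion, I would take a loose crystalline lift $f \colon L \onto T_n$ realizing $\M_{(n)}$, with Wach module $\mathfrak L$ over $\gs$. The restriction $f|_{G_{K'}} \colon L|_{G_{K'}} \onto T_n|_{G_{K'}}$ is again a loose crystalline lift, and by compatibility of the Wach module equivalence (Theorem \ref{thm: WachBerger}) with unramified base change, the Wach module of $L|_{G_{K'}}$ is $W(k')[\![u]\!] \otimes_\gs \mathfrak L$. Its image in $M'_n$ is $W(k') \otimes_{W(k)} \M_{(n)}$, which is therefore a liftable Wach $\varphi$-model of $M'_n$; by maximality, $W(k') \otimes_{W(k)} \M_{(n)} \subset \M'_{(n)}$.

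For the reverse inclusion I would fix a loose crystalline lift $f' \colon L' \onto T_n|_{G_{K'}}$ realizing $\M'_{(n)}$, with Wach module $\mathfrak L'$ over $W(k')[\![u]\!]$, and induce to $G_K$. The representation $L := \Ind_{G_{K'}}^{G_K} L'$ is crystalline (since $K'/K$ is finite unramified), and composing $\Ind f'$ with the adjunction counit yields a surjective $G_K$-map $L \onto T_n$, i.e., a loose crystalline lift of $T_n$. Shapiro's lemma identifies the \'etale $\varphi$-module of $L$ over $\O_{\E_K}$ with $M'_n$ (viewed via restriction of scalars along $\O_{\E_K} \hookrightarrow \O_{\E_{K'}}$), identifies the Wach $\varphi$-module of $L$ over $\gs$ with $\mathfrak L'$ by the same restriction of scalars, and identifies the counit-induced map $M'_n \to M_n$ with the trace $\mathrm{tr} := \mathrm{tr}_{W(k')/W(k)} \otimes \mathrm{id}$. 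Hence the image of $\mathfrak L'$ in $M_n$ equals $\mathrm{tr}(\M'_{(n)})$, and maximality of $\M_{(n)}$ forces $\mathrm{tr}(\M'_{(n)}) \subset \M_{(n)}$. Since $k'/k$ is separable the trace form on $W(k')$ is non-degenerate; picking a $W(k)$-basis $\{e_i\}$ of $W(k')$ and the dual basis $\{e_i^*\}$ satisfying $\mathrm{tr}(e_i e_j^*) = \delta_{ij}$, any $x \in \M'_{(n)}$ decomposes uniquely as $x = \sum_i e_i x_i$ with $x_i \in M_n$. Since $e_j^* x \in \M'_{(n)}$ (because $\M'_{(n)}$ is stable under $W(k')\subset W(k')[\![u]\!]$), we obtain $x_j = \mathrm{tr}(e_j^* x) \in \mathrm{tr}(\M'_{(n)}) \subset \M_{(n)}$, whence $x \in W(k') \otimes_{W(k)} \M_{(n)}$.

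The main technical hurdle is verifying the two compatibility statements for induction that appear in the second inclusion: that the Wach $\varphi$-module of $\Ind_{G_{K'}}^{G_K} L'$ over $\gs$ is obtained from $\mathfrak L'$ by restriction of scalars, and that on \'etale $\varphi$-modules the counit corresponds to the trace. Both are formal consequences of the adjunction $\Ind \dashv \Res$ combined with Shapiro's lemma and the self-duality of $W(k')$ over $W(k)$ under the trace form, but they require careful bookkeeping with the various equivalences of categories.
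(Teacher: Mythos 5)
Your proof is correct, and it fills in a complete argument where the paper itself merely defers to the $(\varphi,\tau)$-module analogue \cite[Lem.~4.2.10]{GL} and asserts that the argument there carries over verbatim after setting $K'_{p^\infty}=K'K_{p^\infty}$ and checking $K'\cap K_{p^\infty}=K$. Your two-inclusion strategy (restriction of the lift for $W(k')\otimes_{W(k)}\M_{(n)}\subset\M'_{(n)}$, and induction plus the counit/trace for the reverse) is the natural one and is the same in spirit as what \cite{GL} does; your use of a dual basis for the trace pairing on $W(k')/W(k)$ is a clean way to package what would otherwise be phrased as Galois descent of $\M'_{(n)}$ along $\Gal(K'/K)$ (which preserves the set of liftable models and hence fixes the unique maximal one).

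One wording slip worth flagging: Shapiro's lemma identifies the \'etale $\varphi$-module of $L=\Ind_{G_{K'}}^{G_K}L'$ over $\O_{\E_K}$ with $\mathcal L'$ (the $\varphi$-module of $L'$ over $\O_{\E_{K'}}$) by restriction of scalars, not with $M'_n$; it is the \emph{intermediate} object $\Ind_{G_{K'}}^{G_K}(T_n|_{G_{K'}})$ whose $\varphi$-module is $M'_n$, and the counit $\Ind\Res T_n\to T_n$ is the map that becomes $\mathrm{tr}_{W(k')/W(k)}\otimes\mathrm{id}\colon M'_n\to M_n$. Your subsequent conclusion (``the image of $\mathfrak L'$ in $M_n$ equals $\mathrm{tr}(\M'_{(n)})$'') is the correct one, so this is only a matter of precision. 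You should also say explicitly that $\mathrm{tr}\colon W(k')\to W(k)$ is \emph{surjective} and that the trace form is \emph{perfect} over the integers because $W(k')/W(k)$ is unramified (separability alone gives non-degeneracy over the fraction field, not a dual basis inside $W(k')$); and that the paper's running hypothesis $K=K_0$ (so $K_{p^\infty}/K$ totally ramified) is what guarantees $K'\cap K_{p^\infty}=K$, which underlies both the identification $M'_n=W(k')\otimes_{W(k)}M_n$ and the Shapiro isomorphism for $H_K\supset H_{K'}$.
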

\begin{proof}
This is the $(\varphi, \Gamma)$-analogue of \cite[Lem. 4.2.10]{GL}.
Similarly as the beginning of \cite[\S 4.2]{GL}, we need to set $K'_{p^\infty}: =K'K_{p^\infty}$. Since we have $K'\cap K_{p^\infty}=K$ (because $K/\Qp$ is unramified), we still have $G_{p^\infty}/G_{p^\infty}' \simeq \Gal(K'/K)$, where $G_{p^\infty}':=\Gal(\overline{K}/K'_{p^\infty})$. Then all the argument in \cite[\S 4.2]{GL} carry over verbatim.
\end{proof}

\subsection{Existence of $h$}\label{subsec h}
\begin{prop}\label{prop-key}
Use notations in \ref{notation free}. Then there exists a constant $h$ only depending on $p$, $f$ and $d$ such that $u ^h \M_{(n-1, 1)} \subset \M_{(n ,1)}$ for all $n\geq 1$. Consequently $u ^h \M_{(n)} ^{i -1, i} \subset  \M_{(n)} ^{i , i +1}$ for all $i= 1, \dots , n -1$ by Lemma \ref{lem list}(2).
\end{prop}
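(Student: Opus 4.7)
The plan is to construct, for each $n \geq 2$, a loose crystalline lift of $T_n$ starting from one that realizes $\M_{(n-1)}$, and to read off the $u^h$-factor from the Tate twist $\varepsilon_p^{-s}$ (with $s = p^{fd}+p-2$) appearing in the inductive construction of loose crystalline lifts.

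First, pick a loose crystalline lift $f_{n-1}: \widetilde L_{n-1} \onto T_{n-1}$ realizing $\M_{(n-1)}$; this exists by the definition of the maximal model. Then apply Step 2 of the proof of Theorem \ref{thm loose lifts} verbatim to $f_{n-1}$: form $\breve W_n = \widetilde L_{n-1} \times_{T_{n-1}} T_n$; reduce mod $p$ to $Z_n$; construct, over a finite unramified extension $K'/K$, a strict crystalline $\O_E$-lift $\widetilde Z_n'$ of $Z_n \otimes_{\Fp} k_E$ whose $\widetilde L_{n-1}/p\widetilde L_{n-1}$-subquotient is lifted by the Tate-twist $\widetilde L_{n-1}(-s)|_{G_{K'}}$; form the fiber product $\widetilde L_n' = \widetilde Z_n' \times_{Z_n} \breve W_n|_{G_{K'}}$; and induce to $\widetilde L_n = \Ind_{G_{K'}}^{G_K} \widetilde L_n' \onto T_n$. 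Let $\M \subset M_n$ be the model realized by this composite. By maximality $\M \subset \M_{(n)}$, hence $q_{n,1}(\M) \subset \M_{(n,1)}$, reducing the proposition to the reverse inclusion $u^h \M_{(n-1,1)} \subset q_{n,1}(\M)$.

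To establish the latter, pass to Wach modules and work over $K'$. The construction of $\widetilde L_n'$ yields a surjection of Wach modules $\L_n' \onto \L_{n-1}(-s)|_{K'}$, and by \S \ref{tate twist} the twist $\L_{n-1}(-s) = \L_{n-1} \otimes \M(\varepsilon_p^{-s})$ has its Frobenius multiplied by $q^s$. Since $q \equiv u^{p-1} \pmod{p}$, modulo $p$ this twist amounts to multiplication by $u^{(p-1)s}$. Carefully tracing the realization map $\L_n' \to M_n|_{K'} \xrightarrow{q_{n,1}} M_1|_{K'}$ through this surjection modulo $p$, and using Lemma \ref{lem-max-descent} to identify the image coming from the untwisted $\L_{n-1}|_{K'}$ with $W(k') \otimes_{W(k)} \M_{(n-1,1)}$, one obtains the inclusion $u^{(p-1)s} \cdot (W(k') \otimes \M_{(n-1,1)}) \subset q_{n,1}(\M')$, where $\M'$ denotes the model realized over $K'$. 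Applying Lemma \ref{lem-max-descent} once more to descend from $K'$ back to $K$ (which preserves the $u^h$-factor, since $K'/K$ is unramified) gives $u^h \M_{(n-1,1)} \subset \M_{(n,1)}$ with $h = (p-1)(p^{fd}+p-2)$, a constant depending only on $p, f, d$.

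The main obstacle is the Wach-module computation in the preceding paragraph: precisely unwinding how the twist by $\varepsilon_p^{-s}$, combined with the two cartesian-product constructions for $\widetilde Z_n'$ and $\widetilde L_n'$, produces exactly the factor $u^{(p-1)s}$ in the image $q_{n,1}(\M')$ modulo $p$. This requires the correspondence between Kisin modules (with $e$) and Wach $\varphi$-modules (with $p-1$) recorded in \S \ref{item comparison}, together with the explicit formula $\varphi(\fe) = q^s \fe$ from \S \ref{tate twist}; it parallels the analysis of maximal Kisin models in \cite{GL}.
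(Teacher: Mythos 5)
Your plan correctly identifies the two steps from the paper (follow the inductive loose crystalline lift construction, then read off a $u$-power factor from the sequence ending in $\widetilde L_{n-1}(-s)\onto \widetilde L_{n-1}/p\widetilde L_{n-1}\onto T_1$), and the first paragraph is essentially what the paper does in its Step 1. However, there is a genuine gap in your Step 2, and as a result your claimed $h=(p-1)s$ is incorrect.

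The gap: you assert that the image $q_{n,1}(\M')$ contains $u^{(p-1)s}\cdot\M_{(n-1,1)}$, with the $u$-power contribution coming \emph{only} from the Tate twist. This overlooks that the maps of Wach $\varphi$-modules induced by the surjections of Galois representations in \eqref{Eq-sequence-m'} are \emph{not} themselves surjective: a surjection of torsion representations corresponds to a map of torsion Wach modules whose cokernel may be $u$-power torsion. Concretely, the image $\mathring \L'_{n-1}$ of the composite $\widetilde{\mathfrak{Z}}_n' \to \widetilde \L'_{n -1}(s)\to \L'_{n-1}/p\L'_{n-1}$ has a $k'[\![u]\!]$-basis $u^{a_i}\bar e_i$ with $a_i = s + b_i$, where $b_i\ge 0$ measures this loss. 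Bounding the $b_i$ is the technical heart of the paper's proof, and it is done by introducing the invariant $\alpha(\M)=v_R(\det\varphi)$, its additivity on short exact sequences, and the formula $\alpha(\L/p\L)=(p-1)\sum_{i\in \mathrm{HT}(V)}(-i)$ (the Wach analogue of \cite[Lem.\ 5.2.5]{GL}); one then compares $\alpha(\mathring\L'_{n-1})$ with $\alpha(\widetilde\L'_{n-1}(-s)/p)$ via $\alpha(\widetilde{\mathfrak Z}'_n/p)$ to get $\sum b_i\le fd^2(p^{fd}-2)$. Your proposal never mentions this $\alpha$-bookkeeping, and your final $h=(p-1)s$ is too small to absorb the $b_i$ contribution in general (e.g.\ for $p=2$ it contributes nothing beyond $s$, yet $b_i$ can be positive). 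The paper's $h=3fd^2p^{fd}$ is chosen precisely to dominate $s+fd^2(p^{fd}-2)$.

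A secondary slip: even the pure ``twist contribution'' is not $u^{(p-1)s}$. Because $\varphi(u)=(1+u)^p-1$ rather than $u^p$, the embedding of the twisted Wach model into the \'etale $\varphi$-module scales by $u^s$, not $u^{(p-1)s}$; this is exactly what Example~\ref{ex-cyclotomic} illustrates ($\M_1(p-1)=u^{p-1}\M_1$, i.e.\ a twist by $\varepsilon_p^{-(p-1)}$ contributes $u^{p-1}$, not $u^{(p-1)^2}$), and the paper flags this as the place where the formula for $a_i$ differs from \cite{GL}. The naive replacement $e\rightsquigarrow p-1$ from \S\ref{item comparison} does not apply to this computation, because the realization map also involves $\varphi(u)$ and the two effects cancel differently. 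In your argument this over-count happens to point in the ``safe'' direction, so it is not the fatal issue; the missing $\alpha$-invariant bound on the $b_i$ is.
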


\begin{proof}
The proof is basically the same as that for \cite[Prop. 5.2.1]{GL}, except a few minor changes (mainly due to the difference of Frobenius actions, cf. \S \ref{item comparison}). For the reader's convenience, we give a sketch of the main arguments; in particular, we point out the changes in our situation.

By Lemma \ref{lem-max-descent}, for any $n$, if we let $M_n':=W(k')\otimes_{W(k)} M_n$ (where $k'/k$ is any finite extension), then the maximal liftable Kisin model of $M_n'$ is $\M'_{(n)} \simeq W(k') \otimes_{W(k)} \M_{(n)}$. Thus, it is easy to see that $\M'_{(n, 1)}=W(k')\otimes_{W(k)} \M_{(n, 1)} $ for any $n$. So to prove our proposition, it suffices to show that
$$u ^h \M'_{(n-1, 1)} \subset \M'_{(n ,1)}.$$

We divide the following argument into two steps. In Step 1, we will construct another Kisin model (denoted as $\mathring{\M}'_{n, 1}$) of $M_1'$ such that $\mathring{\M}'_{n, 1} \subset \M'_{(n ,1)}$. Then in Step 2, we show that $u ^h \M'_{(n-1, 1)} \subset \mathring{\M}'_{n, 1}.$

\textbf{Step 1.} The loose crystalline lift $\widetilde L'_n \onto \breve W_n|_{G_{K'}} \onto  T_n|_{G_{K'}}$ realizes a liftable Kisin model $\mathring{\M}'_n$ of $M'_n$, and so $\mathring{\M}'_n \subset \M_{(n)}'$.
The following composite of $G_{K'}$-representations
\begin{equation} \label{easy 1}
\xymatrix{
\widetilde L'_n\ar@{->>}[r] & \breve W_n|_{G_{K'}} \ar@{->>}[r] &T_n|_{G_{K'}} \ar@{->>}[r] & T_1|_{G_{K'}}
.}
\end{equation}
realizes a Kisin model $\mathring{\M}'_{n, 1}$ in $M_1'$, and we certainly have
$$\mathring{\M}'_{n, 1} \subset \M'_{(n, 1)}.$$

By some elementary diagram chasing (using \eqref{Diag-0},  \eqref{Diag-1-NEW} and \eqref{Diag-2 first apperance}; cf. \cite[Prop. 5.2.1]{GL} for the chasing), the composite \eqref{easy 1} is the same as
\begin{equation}\label{Eq-sequence-m'}
 \xymatrix{\widetilde L'_n\ar@{->>}[r] &  \widetilde Z_n' \ar@{->>}[r] & \widetilde L_{n -1}(-s)  \ar@{->>}[r] &\widetilde L_{n -1}/p\widetilde L_{n -1}  \ar@{->>}[r] & T_1.}
\end{equation}
And so \eqref{Eq-sequence-m'} also realizes $\mathring{\M}'_{n, 1}$.

\textbf{Step 2.} The last surjection of \eqref{Eq-sequence-m'} is induced by the following composite of $G_{K}$-representations:
\begin{equation}\label{Eq-sequence-mn-1}
\xymatrix{\widetilde L_{n-1}\ar@{->>}[r] & T_{n-1} \ar@{->>}[r] & T_1}.
\end{equation}
We may assume that $\wt L_{n-1}\twoheadrightarrow T_{n-1}$ realize $\M_{(n -1)}$; thus the composite \eqref{Eq-sequence-mn-1} restricted to $G_{K'}$ realizes $\M'_{(n-1, 1)}$.

So in order to prove $u ^h \M'_{(n-1, 1)} \subset \M'_{(n ,1)}$, it suffices to show
\begin{equation}
u ^h \M'_{(n-1, 1)} \subset \mathring{\M}'_{n, 1}.
\end{equation}
And so it suffices to show that the cokernel of the following composite (which are maps of Wach modules corresponding to \eqref{Eq-sequence-m'}) is killed by $u^h$
\begin{equation}\label{Eq-uh}
\xymatrix{\widetilde{\L}'_{n} \ar[r] &  \widetilde{\mathfrak{Z}}_n' \ar[r] & \widetilde \L'_{n -1}(s)     \ar[r] & \L'_{n-1}/p\L'_{n-1}}.
\end{equation}
By the Wach module analogue of \cite[Lem. 5.2.2]{GL} (the proof in the Wach module setting is strictly \emph{verbatim}), the map $\widetilde{\L}'_{n} \to  \widetilde{\mathfrak{Z}}_n'$ is in fact surjective, so we only need to consider the cokernel of the following composite of maps:
\begin{equation}\label{Eq-uhuh}
\xymatrix{\widetilde{\mathfrak{Z}}_n' \ar[r] & \widetilde \L'_{n -1}(s)     \ar[r] & \L'_{n-1}/p\L'_{n-1}}.
\end{equation}
Denote the image of the composite \eqref{Eq-uhuh} as $\mathring \L'_{n-1}$, which is contained in $\L'_{n-1}(s)/p\L'_{n-1}(s)$.
So we can choose basis $e_1, \ldots e_m$ of $\L'_{n-1}$, such that $\mathring \L'_{n-1}$ has a $k'[\![u ]\!]$-basis formed by $u^{a_1}\bar e_1, \ldots u^{a_m}\bar e_m$, where
\begin{equation} \label{eqaibi}
a_i=s+b_i \text{ with } b_i\ge 0.
  \end{equation}
Here, the expression of ``$a_i$" in \eqref{eqaibi} is different from that in \cite{GL} (below \cite[Eqn. (5.2.5)]{GL}). This is essentially because in \emph{loc. cit.}, \cite[Ex. 2.4.4]{GL} is (implicitly) used; whereas in our situation, we use our corresponding Ex. \ref{ex-cyclotomic}.

Finally, it suffices to bound $a_i$; the proof follows similar ideas as in \cite[Lem. 5.2.6]{GL}.

\textbf{Convention}:
In the remaining of the proof, \emph{all} the representations that we consider are $G_{K'}$-representations, and all Wach modules are over $\gs'=W(k')\otimes_{W(k)} \gs$. To be completely rigorous, we will need to restrict many representations from $G_K$ to $G_{K'}$, and use prime notation over Wach modules (\emph{i.e.}, notations like $\M'$). For notational simplicity, we will drop these prime notations.

\begin{itemize}[leftmargin=0cm]
\item Firstly, given any $p$-torsion Wach $\varphi$-module $\M$ (over $k'[\![u]\!]$), denote (similarly as in \cite[Def. 5.2.4]{GL})
$$\alpha(\M): = v_R (\det (\varphi)).$$
The invariant $\alpha(\M)$ is well-defined, and is additive with respect to short exact sequences (as in \cite[Lem. 5.2.5(1)]{GL}). Furthermore, if $L$ is a $G_{K'}$-stable $\Z_p$-lattice in a crystalline representation $V$  with non-positive Hodge-Tate weights $\t{HT} (V)$, and $\L$ the corresponding Wach module, then by the proof of \cite[III. 3.1]{Ber}, we have
\begin{equation} \label{eq alpha}
 \alpha(\L/p\L) = (p-1)(\sum_{i \in \t{HT} (V)} -i).
\end{equation}
Note that \eqref{eq alpha} is the Wach module analogue of \cite[Lem. 5.2.5(2)]{GL}.

\item Now, recall that $\mathring \L_{n-1}$ has a $k'[\![u ]\!]$-basis formed by $u^{a_1}\bar e_1, \ldots u^{a_m}\bar e_m$, where $\bar e_1, \ldots \bar e_m$ is a $k'[\![u ]\!]$-basis of $\L_{n-1}/p\L_{n-1}$.
We clearly have
$$\alpha(\mathring \L_{n-1}) = (p-1)\sum_{1 \le i\le m}a_i +\alpha (\widetilde \L_{n-1}/p \widetilde \L_{n-1})
=(p-1)\sum_{1 \le i\le m}b_i +\alpha \left(\widetilde \L_{n-1}(-s)/p \widetilde \L_{n-1}(-s)\right).$$
 Since $b_i \le \sum_{1 \le i\le m}b_i$, so we only need to bound $\alpha(\mathring \L_{n-1})-\alpha (\widetilde \L_{n-1}(-s)/p \widetilde \L_{n-1}(-s))$.
By exactly the same (easy) argument as in \cite[Lem. 5.2.6]{GL} (which uses \cite[Lem. 5.2.2]{GL}), we have
\begin{eqnarray*}
 \alpha(\mathring \L_{n-1})-\alpha \left(\widetilde \L_{n-1}(-s)/p \widetilde \L_{n-1}(-s)\right)
 &\le & \alpha(\widetilde{\mathfrak Z_n}/p\widetilde{\mathfrak Z_n}) - \alpha \left(\widetilde \L_{n-1}(-s)/p \widetilde \L_{n-1}(-s)\right) \\
 &= & (p-1) ( \sum_{i \in \t{HT} (\widetilde N_n)} -i  ), \textnormal{ by additivity of } \alpha \text{ and \eqref{eq alpha}}.
\end{eqnarray*}
Note in the current paper, the $\Qp$-dimension of  $\widetilde{N}_n[\frac 1 p]$ (constructed in the first row of \eqref{Diag-2 first apperance}) should be $fd^2$, and so
$$(p-1)\sum_{1 \le i\le m}b_i  \leq  (p-1)fd^2(p^{fd}-2). $$
And so we can choose any $h$ such that $h \leq s + fd^2(p^{fd}-2)$.
So in our current paper, we can choose
\begin{equation}\label{eq h}
  h: = 3fd^2p^{fd}
\end{equation}

\end{itemize}

\end{proof}

\section{Overconvergent basis and main theorem} \label{sec OC}
In this section, we prove our main theorem; then we also make some comparison with known proofs.

\subsection{Overconvergent basis}
We first show the existence of an ``overconvergent basis", with respect to which all entries of the matrices for $\varphi$ and $\gamma \in \Gamma_K$ are overconvergent elements. In the following, we will use notations like $(e_j)$ to mean a row vector $(e_j)_{j=1}^d=(e_1, \cdots, e_d)$.

\begin{theorem}\label{thm OC basis}
Suppose $K/\Qp$ is finite extension and $K=K_0$.
Let $T$ be a finite free $\Zp$-representation of $G_K$ of rank $d$. Let $h = 3fd^2p^{fd}$ be as in \eqref{eq h}.
Then there exists an $\O_\E$-basis $(e_1, \dots, e_d)$ of $\underline{\hat{M}}(T)$ such that,
\begin{itemize}
  \item $\varphi(e_j) =(e_j) A$ with $A\in \Md(\gs [\![\frac{p}{x}]\!]) \subset \Md( \gs [\![\frac{p}{u^{4h+3ph}}]\!]   )  $ where $x=u^{4h}(\varphi(u)^{3h})$.

    \item  $\gamma(e_j) =(e_j) B$ with $B \in \Md(\gs [\![\frac{p}{y}]\!])$ where $y=y_\gamma=(\varphi(u))^{4h}  (\gamma( u))^{3h}$, for any $\gamma \in \Gamma_K$.
\end{itemize}
\end{theorem}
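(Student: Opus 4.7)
The plan is to mirror the construction of \cite[Thm.~6.1.1]{GL} for Kisin modules and $(\varphi,\tau)$-modules, translated to Wach $\varphi$-models and $(\varphi,\Gamma)$-modules via the dictionary of \S\ref{item comparison}. The overarching idea is to use the tower of maximal liftable Wach models $\{\M_{(n)}\}_{n\ge 1}$, together with the compatibility $\M_{(n+1)}[p^n]=\M_{(n)}$ of Lemma \ref{lem list}(1), to assemble a coherent sequence of bases that passes in the inverse limit to an $\O_\E$-basis of $M=\underline{\hat M}(T)$ with the prescribed overconvergence properties.

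Concretely, I would first fix a $k(\!(u)\!)$-basis $(\bar\epsilon_1,\dots,\bar\epsilon_d)$ of $M_1$ chosen so that its elements lie in $\M_{(1)}$, and then lift inductively: for each $n\ge 2$, I would construct a basis $(\epsilon_1^{(n)},\dots,\epsilon_d^{(n)})$ of $M_n$ such that $\epsilon_j^{(n)}\bmod p^{n-1}$ agrees with $\epsilon_j^{(n-1)}$ up to the discrepancy controlled by Proposition \ref{prop-key}. The lifting step would use the Wach-module analogue of the Nakayama argument in \cite{GL}, and this is where Proposition \ref{prop-key} enters: the inclusion $u^h\M_{(n-1,1)}\subset \M_{(n,1)}$ quantifies exactly how much ``$u^h$-denominator'' must be absorbed at each passage from level $n-1$ to level $n$. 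Passing to the inverse limit in $n$ would then produce the desired basis $(e_1,\dots,e_d)$.

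For the matrix $A$ of $\varphi$: each $\M_{(n)}$ is a Wach $\varphi$-module, so $1\otimes\varphi\colon \varphi^*\M_{(n)}\to \M_{(n)}$ has cokernel killed by a power of $q=\varphi(u)/u$; hence inverting $\varphi$ on $\M_{(n)}$ introduces $\varphi(u)$-denominators. Combined with the $u^h$-denominators coming from Proposition \ref{prop-key} at each level, I would prove by induction on $n$ that the entries of $A$ modulo $p^{n+1}$ lie in $x^{-n}\gs$ with $x=u^{4h}\varphi(u)^{3h}$, the exponents $4h$ and $3h$ encoding the number of times one passes ``through'' $u$ versus ``through'' $\varphi(u)$ in the bookkeeping. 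Summing over $n$ would give $A\in\Md(\gs[\![p/x]\!])$, and the stated inclusion into $\Md(\gs[\![p/u^{4h+3ph}]\!])$ is immediate since $v_u(\varphi(u))=p$. The same bookkeeping would work for $\gamma\in\Gamma_K$: the maximal liftable Wach model $\M_{(n)}$ is automatically $\Gamma_K$-stable (realisability by a loose crystalline lift is $\Gamma_K$-equivariant and the maximum is preserved by $\gamma$), and $\gamma(u)=(1+u)^{\varepsilon_p(\gamma)}-1$ has $u$-valuation $1$ just like $\varphi(u)$, so the parallel argument yields $B\in\Md(\gs[\![p/y]\!])$ with $y=\varphi(u)^{4h}\gamma(u)^{3h}$; the replacement of $u^{4h}$ by $\varphi(u)^{4h}$ reflects that the comparison between $\gamma(e_j)$ and its basis expansion must first be pushed through $\varphi$ to match the side where the denominators of Proposition~\ref{prop-key} are introduced.

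The main obstacle will be the combinatorial precision required: one must verify that the cumulative denominator accrued at level $n$ is \emph{exactly} $x^n$ (respectively $y^n$), and not a slightly larger power. This will require carefully tracking, at each inductive step, the interaction between the $u^h$-loss of Proposition \ref{prop-key}, the $q^r$-height of the Wach model, and the way these compose under $\varphi$ or $\gamma$. Once the bounds are established uniformly in $n$ — which is possible precisely because $h=3fd^2p^{fd}$ is independent of $n$ by Proposition \ref{prop-key} — the inverse limit assembles into the basis $(e_j)$ with the claimed matrices $A$ and $B$.
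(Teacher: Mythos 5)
Your high-level plan --- assemble a compatible tower of bases from the maximal liftable Wach models $\M_{(n)}$, use Proposition \ref{prop-key} for denominator control, track the matrices of $\varphi$ and $\gamma$ at each level, and pass to the inverse limit --- is exactly the paper's route, which is itself a transliteration of \cite[Thm.~6.1.1]{GL}. But the part you explicitly defer as ``the main obstacle'' (verifying that the cumulative denominator at level $n$ is exactly $x^n$) is where the entire proof happens, and the bookkeeping you gesture at (``the number of times one passes `through' $u$ versus `through' $\varphi(u)$'') is not what actually produces the exponents $4h$ and $3h$.

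What is missing: the paper builds a \emph{two-index} family of generators $\e_{(n),j}^{(i)}$ ($1\le i\le n$, $1\le j\le d$) of $\m_{(n)}$, chosen so that $\{p^{i-1}\e_{(n),j}^{(i)}\}_j$ is a $\ku$-basis of $\m_{(n)}^{i-1,i}$ and compatible under the identifications of Lemma \ref{lem list}(2). It then invokes a specific module-theoretic factorization
\begin{equation*}
(p\e_{(n),j}^{(n)})=\bigl(\iota_{n-1,n}(\e_{(n-1),j}^{(n-1)})\bigr)\,\Lambda_{n-1}\Bigl(I_d+\tfrac{p}{u^{2h}}Z_{n-1}\Bigr),
\end{equation*}
with $\Lambda_{n-1}\in\Md(\gs)$, $u^h\Lambda_{n-1}^{-1}\in\Md(\gs)$, $Z_{n-1}\in\Md(\gs[\tfrac{p}{u^{2h}}])$; this is where Proposition \ref{prop-key} enters, through \cite[Lem.~5.1.2]{GL}. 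The basis is $(e_j^{(n)})=(\e_{(n),j}^{(n)})Y_n^{-1}$ with $Y_n=\prod_{x=1}^{n-1}\Lambda_{x-1}(I_d+\tfrac{p}{u^{2h}}Z_{x-1})$. Crucially, the $e_j^{(n)}$ reduce to $e_j^{(n-1)}$ mod $p^{n-1}$ \emph{exactly}, not ``up to a discrepancy controlled by Proposition~\ref{prop-key}'' as you write: the whole point of $Y_n$ is to absorb the discrepancy that is present in the generators so that the resulting bases are strictly compatible and admit a genuine inverse limit. The exponents $4h$ and $3h$ then emerge from $A_n=Y_nP_n\varphi(Y_n^{-1})$ and $B_n=Y_nQ_n\gamma(Y_n^{-1})$ together with the bounds $Y_n\in\Md(u^{-2(n-1)h}\gs)$, $P_n,Q_n\in\Md(\gs[\tfrac{p}{u^{2h}}])$, $Y_n^{-1}\in\Md(u^{-3(n-1)h}\gs)$; they are not a count of passages through $u$ or $\varphi(u)$ and your heuristic explanation of the form of $y$ does not correspond to the actual computation. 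Finally, converting the level-$n$ bounds into genuine membership in $\gs[\![\tfrac{p}{x}]\!]$ and $\gs[\![\tfrac{p}{y}]\!]$ requires the convergence lemma \ref{lem shrink ring}(2), which your proposal omits. Without these specifics the proposal identifies the right tools but does not constitute a proof.
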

\begin{proof}
The proof follows exactly the same ideas  as in \cite[Thm. 6.1.1]{GL}.
Indeed, the construction of the basis $(e_1, \dots, e_d)$ is \emph{verbatim} as Step 1, Step 2 and Step 3 of \emph{loc. cit.} (which we sketch below for the reader's convenience). We only need to modify Step 4 of \textit{loc. cit.} in our situation.

\textbf{Step 1.} \emph{Generators of $\m_{(n)}$}.
First of all, by induction on $n$, we construct a specific set of generators $\{\e_{(n), j}^{(i)},  1 \le i \le n\}_{j=1}^d$ of $\m_{(n)}$ such that $\{p^{i-1}\e_{(n), j}^{(i)}\}_{j=1}^d$ forms a $\ku$-basis of $\m_{(n)}^{i-1, i}$.

We choose $\{\e_{(1), j}^{(1)}\}_{j=1}^d$ any $\ku$-basis of $\m_{(1)}$.
Suppose we have defined $\{\e_{(n-1), j}^{(i)},  1 \le i \le n-1\}_{j=1}^d$ the generators for $\m_{(n-1)}$.
By Lem. \ref{lem list}(2), we can use the map $\iota_{n-1, n}: M_{n-1} \to M_{n}$ to identify $\m_{(n-1)}^{i-1, i}$ with $\m_{(n)}^{i-1, i}$ when $1 \le i \le n-1$. Now define
\begin{itemize}
  \item $\e_{(n), j}^{(i)} : =\iota_{n-1, n} (\e_{(n-1), j}^{(i)}) $ for $1 \le i \le n-1, 1\le j \le d$, and
  \item choose any $\{\e_{(n), j}^{(n)}\}_{j=1}^d$ in $\m_{(n)}$, so that $(p^{n-1}\e_{(n), j}^{(n)})$ is
  a $\ku$-basis of $\m_{(n)}^{n-1, n}=p^{n-1}\m_{(n)}$.
\end{itemize}
This finishes the inductive definition.

\textbf{Step 2.} \emph{Basis of $M_n$.}
With above, now we define basis for $M_n$.
By \cite[Lem. 5.1.2(2)]{GL} (note that this lemma is about \emph{module theory} over $\gs$; there is no $\varphi$ involved), for any $x\ge 1$, we can write
\begin{eqnarray} \label{eq1}
(p\e_{(x), j}^{(x)}) =(\iota_{x-1, x}(\e_{(x-1), j}^{(x-1)}))  \Lambda_{x-1} (I_d+ \frac{p}{u^{2h}} Z_{x-1})
\end{eqnarray}
with $\Lambda_{x-1} \in \Md(\gs)$ such that $u^h\Lambda_{x-1}^{-1} \in \Md(\gs)$ and $Z_{x-1} \in \Md(\gs[\frac{p}{u^{2h}}])$.
Let
$$(e^{(n)}_j)   =  (\e_{(n), j}^{(n)}) Y_n^{-1},\quad \text{ where } Y_n:=\prod_{x=1}^{n-1}  (  \Lambda_{x-1} ( I_d +  \frac{p}{u^{2h}} Z_{x-1}  ) ).$$
We can arrange that $Y_n \in \Md(\frac{1}{u^{2(n-1)h}}\gs)$, and $Y_n^{-1} \in \Md(\frac{1}{u^{3(n-1)h}}\gs)$ (because everything is $p^n$-torsion here), and we can easily check (by Nakayama Lemma) that  $ (e^{(n)}_1, \ldots e^{(n)}_d)$ is an $\mathcal O_{\mathcal E, n}$-basis of $M_n$.

\textbf{Step 3.} \emph{Compatibility of basis.}
We can easily check that $(e^{(n-1)}_j) = (e^{(n)}_j  \mod p^{n-1})$.
So now we can define $(e_j) : =\lim_{n \to \infty} (e^{(n)}_j)$, which is a basis of $M$.

\textbf{Step 4.} \emph{Matrices for $\varphi$ and $\gamma$.} Now, to prove our theorem, similarly as in Step 4 of \cite[Thm. 6.1.1]{GL} (which used \cite[Lem. 6.1.3]{GL}), we can apply our Lemma \ref{lem shrink ring}(2), and so it suffices to show that (for any $\gamma \in \Gamma_K$),
\begin{itemize}
  \item $\varphi(e_j^{(n)}) =(e_j^{(n)}) A_n$ with $A_n\in \Md(\frac{1}{x^{(n-1)}}\gs)$ where $x=u^{4h}(\varphi(u)^{3h})$.

  \item $\gamma(e_j^{(n)}) =(e_j^{(n)}) B_n$ with $B_n \in \Md(\frac{1}{y^{(n-1)}}\gs)$ where $y =(\varphi(u))^{4h}  (\gamma( u))^{3h}$.
\end{itemize}
Since $\m_{(n)}$ comes from a loose crystalline lift, we can write
\begin{itemize}
  \item $\varphi(\e_{(n), j}^{(n)}) = \sum_{i=1}^n (\e_{(n), j}^{(i)}) P_i^{(1)}$, with $P_i^{(1)} \in \Md(\gs)$,
   \item $\gamma(\e_{(n), j}^{(n)}) = \sum_{i=1}^n (\e_{(n), j}^{(i)}) Q_i^{(1)}$, with $Q_i^{(1)} \in \Md(\gs)$.
\end{itemize}
By \cite[Lem. 5.1.2]{GL} (note that this lemma is about \emph{module theory} over $\gs$; there is no $\varphi$ involved), for all $i<n$, we can write
$$ (\e_{(n), j}^{(i)}) =(\e_{(n), j}^{(n)}) Y_{i, n}  \text{ with }  Y_{i, n} \in \Md(\gs[\frac{p}{u^{2h}}]).$$
So we can write
\begin{itemize}
  \item $\varphi(\e_{(n), j}^{(n)}) =(\e_{(n), j}^{(n)}) P_n$, with $P_n \in \Md(\gs[\frac{p}{u^{2h}}])$,
  \item  $\gamma(\e_{(n), j}^{(n)}) =(\e_{(n), j}^{(n)}) Q_n$, with $Q_n \in \Md(\gs[\frac{p}{u^{2h}}])$.
\end{itemize}
Then we have
\begin{itemize}
\item $A_n=  Y_n P_n \varphi(Y_n^{-1})$,
 \item $B_n=  Y_n  Q_n \gamma(Y_n^{-1})$,
\end{itemize}
and we can easily conclude.
Finally, the containment $ \Md(\gs [\![\frac{p}{x}]\!]) \subset \Md( \gs [\![\frac{p}{u^{4h+3ph}}]\!]   ) $ is via Lemma \ref{lem gs frac}(1).
\end{proof}

\begin{lemma} \label{lem shrink ring}
Suppose $v \in W(R)-pW(R)$.
\begin{enumerate}
\item $W(R)[\![\frac{p}{v}]\!] \subset W(\Fr R)^\dagger$.
\item Suppose that $y_n = \frac{x_n}{v^{(n-1)}} \in W_n(\Fr R)$ such that  $y_{n +1} \equiv y _{n}  \mod p ^n $ in $W_{n+1} (\Fr R)$.
If $x_n \in \gs$ for all $n$, then $y_n$ converges to a $y \in \gs[\![\frac{p}{v}]\!]$.
\end{enumerate}
\end{lemma}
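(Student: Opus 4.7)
The plan is to handle (1) via a Gauss-norm estimate and (2) via an explicit inductive construction of the coefficients $a_k \in \gs$. Both parts open with the same observation: since $v \in W(R) \setminus pW(R)$, the reduction $\bar v \in R$ is nonzero, hence nonzero in $\Fr R$, and so $v$ is a unit in $W(\Fr R)$ (a complete DVR with residue field $\Fr R$); in particular, $p/v$ is a well-defined element of $W(\Fr R)$. Set $c := v_R(\bar v)$; we may assume $c > 0$ (if $c=0$, $v$ is already a unit in $W(R)$ and everything is immediate).

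For (1), I propose to work with the Gauss-type valuation
\[
v^{(r)}\!\left(\textstyle\sum_i p^i [x_i]\right) \ :=\ \inf_i\bigl\{\,i + r\tfrac{p-1}{p}\, v_R(x_i)\,\bigr\}
\]
on $W(\Fr R)$, under which $W(\Fr R)^{\dagger, r}$ becomes a Banach ring (a standard fact from \cite[\S II.1]{Colmez-overcon}, using that $\Fr R$ is a perfect valuation field). Since $v$ has Teichm\"uller expansion $[\bar v] + pw$ with $w \in W(R)$ and $v_R(\bar v) = c$, a direct computation gives $v^{(r)}(v) = rc\tfrac{p-1}{p}$ provided $r < p/(c(p-1))$. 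By multiplicativity of $v^{(r)}$ one then has $v^{(r)}(1/v) = -rc\tfrac{p-1}{p}$, so
\[
v^{(r)}(p/v) \ =\ 1 - rc\tfrac{p-1}{p} \ =:\ \delta \ >\ 0.
\]
For any $a_k \in W(R)$ we have $v^{(r)}(a_k) \geq 0$, hence $v^{(r)}(a_k (p/v)^k) \geq k\delta \to \infty$. Thus $\sum_k a_k(p/v)^k$ is $v^{(r)}$-Cauchy, and by Banach completeness it converges to an element of $W(\Fr R)^{\dagger, r} \subset W(\Fr R)^{\dagger}$; this limit agrees with the $p$-adic limit in $W(\Fr R)$ by comparing Teichm\"uller coefficients, so it is precisely the element of $W(R)[\![p/v]\!]$ we began with.

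For (2), I will construct $a_k \in \gs$ inductively. The compatible system $(y_n)$ defines $y \in W(\Fr R) = \varprojlim_n W_n(\Fr R)$ with $y\, v^{n-1} \equiv x_n \pmod{p^n}$ in $W(\Fr R)$. Set $a_0 := x_1 \in \gs$; from $y \equiv x_1 \pmod p$ we obtain $y - a_0 \in pW(\Fr R)$, and since $v$ is a unit, the element $y^{(1)} := (y - a_0)v/p$ lies in $W(\Fr R)$. A short calculation gives $y^{(1)} v^{n-1} \equiv (x_{n+1} - a_0 v^n)/p \pmod{p^n}$; the numerator lies in $\gs$ and in $pW(\Fr R)$ (since $y \equiv a_0 \pmod p$ forces $x_{n+1} \equiv a_0 v^n \pmod p$ after clearing the unit $v^n$), and as $\gs$ is $p$-adically closed in $W(R)$ one has $\gs \cap pW(\Fr R) = p\gs$, so the quotient lies in $\gs$. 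This shows $y^{(1)}$ satisfies the same hypothesis as $y$, and the induction produces $a_0, a_1, \ldots \in \gs$. At stage $N$ we have $y = \sum_{k \leq N} a_k(p/v)^k + (p/v)^{N+1} y^{(N+1)}$, with remainder in $p^{N+1} W(\Fr R)$, so the series converges $p$-adically to $y$, exhibiting $y \in \gs[\![p/v]\!]$.

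The main obstacle is the structural fact underlying (1): that $v^{(r)}$ really is a multiplicative valuation and that $W(\Fr R)^{\dagger, r}$ really is Banach-complete under it. This is what permits one to pass from the pointwise estimate $v^{(r)}(p/v) > 0$ to convergence of an infinite series, and it is the place where the combinatorics of Witt-vector addition (with its carries) is absorbed by invoking the standard theory. Granting it, (1) collapses to a two-line valuation computation, while (2) is a straightforward induction whose only delicate point is the $p$-adic closedness of $\gs$ in $W(R)$.
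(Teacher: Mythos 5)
Your argument for part (1), via the Gauss-type valuation $v^{(r)}$ and completeness of $W(\Fr R)^{\dagger,r}$, is sound, and it is the standard way to verify membership in the overconvergent ring; the assertion that the $v^{(r)}$-limit and the $p$-adic limit of the partial sums coincide is stated rather briskly, but it is true and can be made precise (for instance by noting that for $j$ fixed and $N\geq j$ the $j$-th Teichm\"uller coefficient of $S_N$ is frozen, and combining the inequality $j + r\tfrac{p-1}{p}v_R((S_j)_j) \geq v^{(r)}(S_j)\geq 0$ with a slight shrink of $r$). The paper itself dismisses item (1) as ``easy,'' so your write-up is an acceptable filling-in. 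For part (2), your inductive Weierstrass-style extraction of coefficients $a_k\in\gs$ is exactly the mechanism behind the cited \cite[Lem.~6.1.3]{GL} (which is the case $v=u^\alpha$), so the approach matches the paper's intended one.

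However, there is a genuine gap in your part (2): the hypothesis of the lemma only requires $v\in W(R)\setminus pW(R)$, but your induction silently uses $v\in\gs$. Concretely, you assert that ``the numerator $x_{n+1}-a_0v^n$ lies in $\gs$''; this requires $a_0v^n\in\gs$, which fails for a general $v\in W(R)$. Without $v\in\gs$ you only get $x_{n+1}-a_0v^n\in W(R)\cap pW(\Fr R)=pW(R)$, so the new numerator lands in $W(R)$ rather than $\gs$, and the inductive hypothesis $x_n^{(1)}\in\gs$ is not preserved. In every application in the paper one does have $v\in\gs$ (there $v$ is $u^\beta$, $u^{4h}\varphi(u)^{3h}$, or $\varphi(u)^{4h}\gamma(u)^{3h}$, all built from $u,\varphi(u),\gamma(u)\in\gs$), so the discrepancy is with the lemma's stated generality rather than with the underlying idea; but as written your proof claims more than it establishes. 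You should either add the hypothesis $v\in\gs\setminus p\gs$ to part (2) explicitly, or point out that the conclusion $y\in\gs[\![p/v]\!]$ is only meaningful and provable under that hypothesis. Once $v\in\gs$ is assumed, the rest of your induction (using $\gs\cap pW(\Fr R)=p\gs$, which is correct) goes through.
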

\begin{proof}
Item (1) is easy. Item (2) is similar to  \cite[Lem. 6.1.3]{GL}, which is a special case of the current lemma (with $v=u^\alpha$).
\end{proof}

\begin{lemma} \label{lem gs frac}
Suppose $a \in \mathbb Z^{\ge 0}, b, m \in \mathbb Z^{\ge 1}$, then we have
\begin{enumerate}
  \item  $\gs   [\![  \frac{p}{ u^a \varphi(u^b) }       ]\!]   \subset  \gs   [\![  \frac{p}{ u^{a+bp}  }       ]\!] $.

  \item $p^m \varphi(u^{mb}) = p^m u^{pmb} (\bmod p^{m+1}\gs)$.

  \item  $\varphi(u^{mb})  \in  u^{pmb  -pm} W(R) (\bmod  p^{m+1}W(\Fr R)).  $

\end{enumerate}
\end{lemma}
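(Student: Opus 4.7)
The plan is to prove each item by explicit computation, exploiting the fact that $\varphi(u) = (1+u)^p - 1$ satisfies $\varphi(u) \equiv u^p \pmod{p}$ in $\gs$, together with the sharper identity $\varphi(u) = u^p + p u \epsilon$ with $\epsilon \in \gs$ coming from the binomial expansion of $(1+u)^p$. Items (2) and (3) are then one-line consequences of a binomial expansion of $\varphi(u^{mb}) = \varphi(u)^{mb}$, while (1) needs a short geometric-series manipulation.

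For (1), I would write $\varphi(u^b) = \varphi(u)^b = u^{pb} + p\xi$ with $\xi \in \gs$ (coming from the mod-$p$ congruence $\varphi(u)\equiv u^p$), and expand
\[
 \frac{p}{u^a\,\varphi(u^b)} \;=\; \frac{p}{u^{a+pb}}\sum_{k\ge 0}(-1)^k\Bigl(\frac{p\xi}{u^{pb}}\Bigr)^k \;=\; \sum_{k\ge 0}(-1)^k\xi^k u^{ka}\Bigl(\frac{p}{u^{a+pb}}\Bigr)^{k+1},
\]
using the bookkeeping identity $p^{k+1}/u^{a+(k+1)pb} = u^{ka}(p/u^{a+pb})^{k+1}$ that rewrites the $k$-th term with coefficient in $\gs$. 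Since each coefficient $(-1)^k\xi^k u^{ka}$ lies in $\gs$, this shows $p/(u^a\varphi(u^b)) \in \gs[\![p/u^{a+pb}]\!]$; substituting this expression into an arbitrary formal series then yields the desired inclusion of rings $\gs[\![p/(u^a\varphi(u^b))]\!] \subset \gs[\![p/u^{a+pb}]\!]$.

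For (2) and (3) I would use the binomial expansion
\[
 \varphi(u^{mb}) \;=\; (u^p + pu\epsilon)^{mb} \;=\; \sum_{k=0}^{mb} \binom{mb}{k}\,p^k\,u^{pmb - k(p-1)}\,\epsilon^k.
\]
Multiplying by $p^m$ and reducing modulo $p^{m+1}\gs$ kills every summand with $k\ge 1$, leaving just $p^m u^{pmb}$; this is (2). For (3), split the sum at $k=m$: terms with $k\ge m+1$ carry the factor $p^{k}\in p^{m+1}\gs$ and hence vanish modulo $p^{m+1}W(\Fr R)$, while for $k\le m$ the $u$-exponent $pmb-k(p-1) \ge pmb-m(p-1) > pmb-pm$, so each such term already lies in $u^{pmb-pm}\gs \subset u^{pmb-pm}W(R)$; combining yields the claim.

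The main obstacle is nothing more than careful bookkeeping of $u$-powers and $p$-powers. The only genuinely substantive step is recognising that the geometric-series expansion in (1) actually lands in the formal power-series ring $\gs[\![p/u^{a+pb}]\!]$ and not merely in a larger completion; that is secured by the explicit factorisation $p^{k+1}/u^{a+(k+1)pb} = u^{ka}(p/u^{a+pb})^{k+1}$, where crucially $u^{ka}\in\gs$ because $a\ge 0$.
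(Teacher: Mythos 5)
Your proof is correct and follows essentially the same route as the paper's: write $\varphi(u)\equiv u^p\ (\bmod\ p)$, use a geometric-series expansion for item (1), and a binomial expansion for items (2) and (3). The only cosmetic differences are that the paper writes $\varphi(u)=u^p+pc$ (rather than your slightly sharper $u^p+pu\epsilon$, which buys nothing extra here) and that the paper factors $(1+pc/u^p)^{-bn}\in\gs[\![p/u^p]\!]\subset\gs[\![p/u^{a+pb}]\!]$ instead of your term-by-term rewriting via $p^{k+1}/u^{a+(k+1)pb}=u^{ka}(p/u^{a+pb})^{k+1}$; these are equivalent bookkeeping devices.
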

\begin{proof}
Write $\varphi(u) =u^p+pc$ for some $c \in \gs$. Then for $x = \sum_{n \ge 0} s_n (\frac{p}{ u^a \varphi(u^b) } )^n \in  \gs   [\![  \frac{p}{ u^a \varphi(u^b) }       ]\!]$ where $s_n \in \gs$, we have
$$ x = \sum_{n\ge 0}  s_n (\frac{p}{ u^{a+bp}  } )^n  \cdot \frac{1}{ ( 1+\frac{pc}{u^{p}}  )^{bn}   }   . $$
Since $\frac{1}{ ( 1+\frac{pc}{u^{p}}  )^{bn}   }  \in \gs [\![  \frac{p}{ u^{p}  }       ]\!]  \subset   \gs   [\![  \frac{p}{ u^{a+pb} }]\!] $, we can deduce (1). For (2) and (3), simply consider the expansion of $(u^p+pc)^{mb}$.
\end{proof}

\subsection{Proof of Main theorem}
First, we prove our main theorem when $K=K_0$.
\begin{prop} \label{thm final OC}
Suppose $K/\Qp$ is a finite extension and $K=K_0$.
Let $T$ be a finite free $\Zp$-representation of $G_K$ of rank $d$.
Then $\underline{\hat{M}}(T)$ is overconvergent on the interval
\begin{equation}
(0, \frac{1}{21pfd^2p^{fd}}].
\end{equation}
\end{prop}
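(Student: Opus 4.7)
The plan is to extract the proposition from Theorem \ref{thm OC basis} by translating the pole-order bound on the matrices of $\varphi$ and $\Gamma_K$ into the overconvergence radius. First I apply Theorem \ref{thm OC basis} with $h = 3fd^2p^{fd}$; this produces an $\O_\E$-basis $(e_1,\dots,e_d)$ of $\underline{\hat{M}}(T)$ with $\varphi$-matrix $A \in \Md(\gs[\![p/u^{4h+3ph}]\!])$ and, for each $\gamma \in \Gamma_K$, a $\gamma$-matrix $B_\gamma \in \Md(\gs[\![p/y_\gamma]\!])$ where $y_\gamma = \varphi(u)^{4h}\gamma(u)^{3h}$. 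Since $4+3p \leq 7p$ for $p \geq 2$, we have $4h+3ph \leq 7ph = 21pfd^2p^{fd}$; set $r_0 := 1/(21pfd^2p^{fd})$, so $r_0 \leq 1/(4h+3ph)$. For each $\gamma$, using $v_R(\varphi(u) \bmod p) = p^2/(p-1)$ and $v_R(\gamma(u) \bmod p) = p/(p-1)$ (from $\gamma(u) = (1+u)^{\varepsilon_p(\gamma)}-1$), one verifies $v_R(y_\gamma \bmod p) \geq (4h+3ph)\cdot p/(p-1)$, so the same $r_0$ works uniformly in $\gamma$.

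The second step is the explicit refinement of Lemma \ref{lem shrink ring}(1) to a specific radius, namely the inclusion $\gs[\![p/u^m]\!] \subseteq \O_\E^{\dag, 1/m}$ and the analog for $\gs[\![p/y_\gamma]\!]$. I would write an element as $\sum s_n (p/u^m)^n$ with $s_n \in \gs$ and expand in Teichm\"uller coordinates: the $i$-th coefficient $x_i \in \Fr R$ is dominated by the $(p/u^m)^i$ contribution, so $v_R(x_i) \geq -imp/(p-1) + O(1)$; plugging into the defining inequality $i + r(p-1)/p \cdot v_R(x_i) \to +\infty$ of $W(\Fr R)^{\dag, r}$ gives membership for all $r \leq 1/m$. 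Consequently every entry of $A$ and every entry of $B_\gamma$ lies in $\O_\E^{\dag, r_0}$.

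The main obstacle is the final step: promoting matrix-level overconvergence to overconvergence of the basis itself, i.e., showing $(e_1,\dots,e_d) \in \underline{\hat{M}}^{\dag, r_0}(T)$. The construction in Step 2 of Theorem \ref{thm OC basis} expresses each $e_j$ as a $p$-adic limit $\lim_n e_j^{(n)}$ with $e_j^{(n)} = (\e_{(n),j}^{(n)}) Y_n^{-1}$, where the $\e_{(n),j}^{(n)}$ lie in the Wach model $\m_{(n)}$ and $Y_n^{-1} \in \Md(u^{-3(n-1)h} \gs)$. Since Wach models embed into the integral overconvergent period ring (they sit inside $\gs^{\ur}/p^n \otimes_{\Z_p} T$), the vectors $e_j^{(n)}$ lift to elements with denominator at most $u^{3(n-1)h}$. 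Tracking coordinates relative to a fixed $\Z_p$-basis of $T$ and applying Lemma \ref{lem shrink ring}(2) with $v = u^{3h}$, each coordinate of the limit $e_j$ lies in $\gs^{\ur}[\![p/u^{3h}]\!] \subseteq \O_{\widehat \E^\ur}^{\dag, 1/(3h)} \subseteq \O_{\widehat \E^\ur}^{\dag, r_0}$ (the last inclusion since $r_0 \leq 1/(3h)$). This places $(e_j) \in \underline{\hat{M}}^{\dag, r_0}(T)$, and combined with the fact that $(e_j)$ is already an $\O_\E$-basis of $\underline{\hat{M}}(T)$, yields $\underline{\hat{M}}(T) = \O_\E \otimes_{\O_\E^{\dag, r_0}} \underline{\hat{M}}^{\dag, r_0}(T)$, completing the proof.
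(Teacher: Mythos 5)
Your proposal diverges from the paper's proof at exactly the point you flag as the ``main obstacle,'' and the route you take there has a genuine gap. The paper introduces the change-of-basis matrix $X$ via $(e_j)=(t_j)X$ with $X\in\Md(\O_{\widehat\E^\ur})$, and proves $X\in\Md(W(R)[\![\tfrac{p}{u^\beta}]\!])$ by induction on $n$: it feeds the relation $\varphi(X)=XA$ into the mod-$p^n$ picture, uses the denominator bound on $A_{m+1}$ from Theorem~\ref{thm OC basis} together with Lemma~\ref{lem gs frac}, and at each step invokes the solvability result \cite[Lem.~6.2.2]{GL} for equations of the form $\varphi(Y)=YB+C$ over $W(R)$. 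You instead try to bound the $(t_j)$-coordinates of $e_j^{(n)}$ directly, by asserting that the Wach model $\M_{(n)}$ sits inside $\gs^\ur/p^n\otimes_{\Z_p}T_n$ and then quoting the $u^{-3(n-1)h}$ denominator of $Y_n^{-1}$.

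That assertion is the gap. It is a Wach-module analogue of the comparison $\gs^\ur\otimes_\gs\L\simeq\gs^\ur\otimes_{\Z_p}L$ for finite-height Kisin modules (Liu), applied further to quotients $\M_{(n)}=f(\L)$; nothing of this kind is stated or cited in the paper, and establishing it is real work. In fact the paper's $n=1$ base case (``verbatim as in \cite[Thm.~6.2.1]{GL}'') essentially proves the mod-$p$ version of exactly this embedding for the specific model $\m_{(1)}$ by running the $\varphi$-equation $\varphi(X_1)=X_1A_1$ directly; the inductive machinery then extends this to higher $n$ without ever needing the embedding as an independent input. A telling symptom of the gap is that, if your claim held, you would obtain the sharper denominator $u^{3h}$ (hence radius $1/(3h)$) rather than the paper's $u^{4h+3ph+2}\le u^{7ph}$ — a bound the paper conspicuously does not claim. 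Two secondary points: Lemma~\ref{lem shrink ring}(2) is stated for $x_n\in\gs$, whereas you need $x_n\in\gs^\ur$ (or $W(R)$), so you would need to re-prove that variant; and your first two paragraphs (entries of $A$ and $B_\gamma$ lying in $\O_\E^{\dag,r_0}$) are never used in your final argument, which works entirely with the $(t_j)$-coordinates — in the paper, by contrast, the matrix $A$ and its truncations $A_{m+1}$ are the engine of the whole induction.
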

\begin{proof}
The proof is similar to that of \cite[Thm. 6.2.1]{GL}.

Let $(e_1, \dots, e_d)$ be the basis of $M$ as in Theorem \ref{thm OC basis}. Let $(t_1, \dots, t_d)$ be any basis of $T$, and let $(e_1, \dots, e_d)=(t_1, \dots, t_d)X$ where $X \in \Md(\O_{\widehat \E^\ur})$. Then we have $\varphi(X)=XA$.
In order to prove the theorem, it suffices to show that $X \in \Md(W(R)[\![\frac{p}{u^\beta}]\!])$ for
\begin{equation}
\beta =4h+3ph+2 \leq 7ph = 21pfd^2p^{fd}
\end{equation}


To prove the above assertion, by Lemma \ref{lem shrink ring}, it suffices to show that $u^{(n-1)\beta}X_n \in \Md(W_n(R))$ where $X_n:=X(\mod p^n)$. We prove this by induction on $n$.
The case $n=1$ is verbatim as in \cite[Thm. 6.2.1]{GL}.
Suppose the claim is true when $n \leq m$, and let us consider the case $n=m+1$.
Write
$X = \sum\limits_{\ell = 0} ^\infty p ^\ell X'_\ell$ with $ X'_\ell \in \Md([\Fr R])$ where $[\Fr R]$ is the set of Teichm\"{u}ller lifts,
so that we have $X_{m+1}=X_m +p^m X'_m (\bmod p^{m+1})$.
It suffices to show $u^{m\beta} X'_m \in \Md([R])$.

From $\varphi(X_{m+1})=X_{m+1}A_{m+1} (\bmod p^{m+1}W(\Fr R))$, we have
\begin{equation} \label{eq11}
\varphi(X_m) + p^m \varphi(X_m') =X_mA_{m+1} + p^m X_m'A_{m+1} (\bmod p^{m+1}W(\Fr R))).
\end{equation}
Multiply both sides of \eqref{eq11} with $\varphi(u^{m\beta})$. Then we have
\begin{itemize}[leftmargin=*]
  \item $\varphi(u^{m\beta})\varphi(X_m) \in \Md(W(R))$ by induction hypothesis.

  \item The term $$\varphi(u^{m\beta}) X_mA_{m+1} = \frac{\varphi(u^{m\beta})}{u^{(m-1)\beta + m(\beta -2)}} u^{(m-1)\beta}X_m  u^{m(\beta-2)}A_{m+1} ,$$
      where
     \begin{itemize}
       \item $\frac{\varphi(u^{m\beta})}{u^{(m-1)\beta + m(\beta -2)}} \in W(R) (\bmod p^{m+1}W(\Fr R)))$ by Lemma \ref{lem gs frac}(3),
       \item $ u^{(m-1)\beta}X_m \in \Md(W(R))$ by induction hypothesis,
       \item and $u^{m(\beta -2)} A_{m+1} \in \Md(W(R))$ by Theorem \ref{thm OC basis}.
     \end{itemize}
      So the term $\varphi(u^{m\beta}) X_mA_{m+1} \in \Md(W(R))  (\bmod p^{m+1}W(\Fr R)))$.

  \item  The term $p^m \varphi(u^{m\beta}) X_m'A_{m+1} =p^m u^{m\beta} X_m'  u^{pm\beta -m\beta} A_{m+1} (\bmod p^{m+1}W(\Fr R)))$  by Lemma \ref{lem gs frac}(2), and we have  $u^{pm\beta -m\beta} A_{m+1} \in \Md(W(R))$.
\end{itemize}
So in the end, we get an equation of the form
\begin{equation}\label{eq12}
  p^m \varphi(Y) =p^m YB+C (\bmod p^{m+1}W(\Fr R)),
\end{equation}
where $Y =u^{m\beta} X'_m, B, C \in \Md(W(R))$. We must have $C\in p^m\Md(W(R))$, and so we can divide \eqref{eq12} by $p^m$ and apply \cite[Lem. 6.2.2]{GL} to conclude.

\end{proof}

We can deduce the full case from above:
\begin{thm} \label{thm final OC for K}
Suppose $K/\Qp$ is a finite extension.
Let $T$ be a finite free $\Zp$-representation of $G_K$ of rank $d$.
Then $\underline{\hat{M}}(T)$ is overconvergent on the interval
\begin{equation} \label{eq r}
(0, \frac{1}{21pf(ed)^2p^{fed}}].
\end{equation}
\end{thm}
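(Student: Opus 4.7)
The plan is to deduce this from the already-proven unramified case (Proposition \ref{thm final OC}) via the induction-restriction argument of Lemma \ref{lem unram}, keeping careful track of the overconvergence radius.

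First, I would consider $T' := \Ind_{G_K}^{G_{K_0}} T$, which is a finite free $\Zp$-representation of $G_{K_0}$. Since $[K:K_0]=e$, the rank of $T'$ equals $ed$. The base field $K_0$ is unramified and its residue field is $k$, with $[k:\Fp]=f$. Applying Proposition \ref{thm final OC} directly to $T'$, with the rank parameter $d$ in that proposition replaced by $ed$, yields that $\underline{\hat M}(T')$ is overconvergent as a $(\varphi, \Gamma)$-module for $G_{K_0}$ on the interval
\[
\left(0,\ \frac{1}{21pf(ed)^2 p^{f(ed)}}\right],
\]
which matches \eqref{eq r} exactly once one writes $f(ed)=fed$ in the exponent.

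Next, I would invoke \cite[Thm. II.3.2(i)]{Colmez-overcon} to transport overconvergence under restriction: if $T'$ is overconvergent on $(0, r]$ as a $G_{K_0}$-representation, then $\Res_{G_K}^{G_{K_0}} T'$ is overconvergent on the same interval $(0, r]$ as a $G_K$-representation (this radius-preservation is precisely the last sentence in the proof of Lemma \ref{lem unram}). By the Mackey decomposition, $\Res_{G_K}^{G_{K_0}}\Ind_{G_K}^{G_{K_0}} T$ contains $T$ as a $\Zp[G_K]$-direct summand, and overconvergence is preserved under taking direct summands of $(\varphi,\Gamma)$-modules (since both $\underline{\hat M}(-)$ and $\underline{\hat M}^{\dagger,r}(-)$ are additive functors, and direct summands of an overconvergent module are overconvergent). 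Hence $\underline{\hat M}(T)$ is overconvergent on the interval $\bigl(0, \tfrac{1}{21pf(ed)^2p^{fed}}\bigr]$, as claimed.

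There is essentially no obstacle here: the entire technical weight of the theorem has been absorbed into Proposition \ref{thm final OC} (the $K=K_0$ case) and the formal properties of overconvergence under $\Ind$ and $\Res$ recorded in Lemma \ref{lem unram}. The only minor thing to double-check is the bookkeeping of parameters: the rank multiplies by $e$ while the residue field degree stays $f$, producing $f(ed)$ in the exponent and $f(ed)^2$ in the polynomial factor, which is exactly the shape of \eqref{eq r}.
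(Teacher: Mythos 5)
Your proof is correct and follows exactly the same route as the paper: induct to $G_{K_0}$ to get a rank-$ed$ representation, apply the unramified case (Proposition \ref{thm final OC}) with $d$ replaced by $ed$, then transport the radius back via restriction, Mackey, and direct summands — precisely the content of Lemma \ref{lem unram}, with the bookkeeping $f(ed)^2$ and $p^{fed}$ worked out the same way. No discrepancy with the paper's argument.
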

\begin{proof}
This is easy corollary of Theorem \ref{thm final OC}, via Lemma \ref{lem unram}. The expression of the overconvergence interval \eqref{eq r} is because the $\Zp$-rank of the induction $\Ind_{G_K}^{G_{K_0}} T$ is $ed$. Indeed, \eqref{eq r} is an overconvergence interval for $\Ind_{G_K}^{G_{K_0}} T$ (as a $G_{K_0}$-representation), hence also for $T$ (as a $G_K$-representation).
\end{proof}

\subsection{Comparison with known proofs} \label{sub compa}
It is natural to wonder how to compare our proof with the classical proof of \cite{Colmez-overcon} (and also \cite{BC08, Kednew}). However, there does not seem to be any obvious link between the two proofs. One particular characteristic of our proof is that the key technical analysis happen over the \emph{imperfect} period rings (such that $\mathfrak S$ and $\mathcal O_{\mathcal E}$); whereas the classical proofs \cite{Colmez-overcon, BC08} (also implicitly in \cite{Kednew}) rely on a Tate-Sen formalism (cf. \cite[Def. 3.1.3]{BC08}) for certain \emph{perfect} period rings (cf., e.g., \cite[Prop. 4.2.1]{BC08}). The link between the two worlds remain mysterious.

As we already mentioned in the introduction, the explicit uniform bound on the overconvergence radius \eqref{eq r} is a new result.
Actually, there are some results concerning the overconvergence radius in \cite[\S 4.2]{BC08} (we thank an anonymous referee for pointing out this reference). In the following, we show that using \textit{loc. cit.}, together with results from \cite{GL} (on loose crystalline lifting) and \cite{liu-car2} (on ramification bound), we can also prove a certain (implicit) ``uniform overconvergence".

In the following, let $K/\Qp$ be a finite extension. Let $T$ be a finite free $\Zp$-representation of $G_K$ of rank $d$. Suppose $L/K$ is a finite Galois extension such that $G_L$ acts trivially on $T/12pT$ (so \emph{a priori}, $L$ depends on $T$, or at least on $T/12pT$).

Let $n(L)=n(G_L)>0$ be the integer defined as in \cite[Def. 3.1.3(TS3)]{BC08}, and let $s(L/K)>0$ be the number defined in \cite[Lem. 4.2.5]{BC08}. It is easy to see that both $n(L)$ and $s(L/K)$ depend only on $L$ and $K$.
Now, let $r(L, K) = \min \{ \frac{1}{(p-1)p^{n(L)-1}}, \frac{1}{s(L/K)}   \}$.
Then by \cite[Prop. 4.2.6]{BC08}, $\underline{\hat{M}}(T)$ is overconvergent on the interval $(0, r(L, K)]$ (note that the convention of $W(\Fr R)^{\dagger, r}$ in \cite[\S 4.2]{BC08} is different from ours in \S \ref{subsec OC}).

\begin{lemma} \label{lem L}
Suppose $p>2$. Then there exists a finite Galois extension $L=L(K, p, d)$ over $K$ which depends only on $K, p, d$, such that $G_L$ acts on $T/12pT$ trivially.
\end{lemma}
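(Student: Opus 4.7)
The key initial observation is that $T/12pT$ is a free $\Z/12p\Z$-module of rank $d$, so the Galois action factors through the finite group $GL_d(\Z/12p\Z)$, whose order $N := |GL_d(\Z/12p\Z)|$ depends only on $p$ and $d$. Consequently the fixed field $L_T \subset \overline K$ of the kernel of $G_K \to GL_d(\Z/12p\Z)$ is a finite Galois extension of $K$ of degree at most $N$. The entire content of the lemma is therefore to produce a single finite Galois extension $L/K$, depending only on $K$, $p$, $d$, containing every $L_T$ as $T$ varies.

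My plan is to invoke the classical consequence of Krasner's lemma: a finite extension $K$ of $\Q_p$ has only finitely many extensions of any given bounded degree inside $\overline K$. Applied with the bound $N = N(p,d)$ above, this gives a finite collection $L_1, \dots, L_m$ of extensions of $K$ of degree $\le N$; their Galois closure in $\overline K$ is a finite Galois extension $L/K$ that depends only on $K$, $p$, $d$, and by construction contains every $L_T$. Hence $G_L \subset G_{L_T}$ acts trivially on $T/12pT$, as desired.

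Alternatively, following the hint in the surrounding discussion, one could produce $L$ more quantitatively using the references cited in the paper. Since $p > 2$, write $12p$ as a product of a prime-to-$p$ factor and a small $p$-power. The prime-to-$p$ quotient $T/(12p/p^{v_p(12p)})T$ cuts out a tamely ramified extension of $K$ of degree bounded only in terms of $p, d$, so it lies in an explicit finite extension of the maximal tame sub-extension of bounded index. For the $p$-primary quotient $T/p^{v_p(12p)}T$, apply loose crystalline lifting (Theorem \ref{thm loose lifts}) to obtain a crystalline lift with Hodge--Tate weights bounded in terms of $p, d$ (compare the bound on $h$ in Proposition \ref{prop-key}); then the ramification bound of \cite{liu-car2} gives an upper bound on the higher ramification of the corresponding $L_T$ depending only on $K, p, d$. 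Combined with the finiteness of extensions of $K$ of bounded degree and bounded conductor, one again obtains a universal $L$.

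The main obstacle, if any, is essentially bookkeeping: verifying that the degree bound in the Krasner approach, or the conductor/Hodge--Tate bound in the more refined approach, is truly uniform across all $T$ of rank $d$ and not secretly depending on the particular $T$. Since the lemma only asserts existence, the Krasner route is the path of least resistance and avoids invoking $p$-adic Hodge theory altogether, so I would carry it out that way and mention the loose-crystalline / ramification-bound alternative only as a remark.
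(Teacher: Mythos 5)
Your Krasner-lemma argument is correct and is a genuinely different, more elementary route than the paper's. The paper instead first observes that for $p>2$ the prime-to-$p$ part of $12p$ is a unit in $\Z_p$, so $T/12pT$ is a quotient of $T/p^2T$ and it suffices to make $G_L$ act trivially on $T/p^2T$; it then applies the loose crystalline lifting theorem (cf.\ Theorem \ref{thm loose lifts} and its refinement in \cite{GL}) to produce a crystalline lift of $T/p^2T$ with Hodge--Tate weights in $[-2(p^{fd}+p-2),0]$, and invokes the ramification bound of \cite[Thm.~1.1]{liu-car2} to obtain a higher ramification subgroup $G_s^\mu$ (with $s,\mu$ depending only on $K,p,d$) acting trivially; $L$ is taken to be the Galois closure of the fixed field of $G_s^\mu$. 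Your sketched ``alternative'' is essentially this route, though the prime-to-$p$/$p$-primary decomposition you propose is unnecessary given the cleaner reduction to $T/p^2T$. As for what each buys: the Krasner route is self-contained, avoids $p$-adic Hodge theory entirely, and settles the bare existence claim with minimal effort; the paper's route gives an $L$ with explicitly bounded ramification, which fits the surrounding discussion (comparison with \cite{BC08}), though as Remark \ref{rem comparison} concedes this extra control does not translate into an explicit overconvergence radius. One small imprecision to fix in your write-up: $T/12pT$ is free of rank $d$ over $\Z_p/12p\Z_p$ (which for $p>2$ is $\Z/p\Z$ or $\Z/p^2\Z$), not over $\Z/12p\Z$; the relevant group is $\mathrm{GL}_d(\Z_p/12p\Z_p)$, but since you only use a bound on its order depending on $p,d$, the argument is unaffected.
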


\begin{rem}\label{rem comparison}
By above lemma, when $p>2$, all the numbers $n(L)$, $s(L/K)$ and $r(L, K)$ depend only on  $K, p, d$ (and not on $T$). So the $\underline{\hat{M}}(T)$'s are overconvergent ``uniformly" (depending on $K, p, d$ only).
However, in this method, we do not know how to explicitly bound the overconvergence radius, because it seems difficult to explicitly bound $n(L)$ and $s(L/K)$ (even though we can bound $L$ quite explicitly, see the proof in the following).
\end{rem}

\begin{proof}[Proof of Lemma \ref{lem L}]
It suffices to show the existence of $L$ such that $G_L$ acts on $T/p^2T$ trivially.
By \cite[Thm. 3.3.2, Rem. 3.3.5]{GL}, there exists a loose crystalline lift of $T/p^2T$, with Hodge-Tate weights in the range $[-2(p^{fd}+p-2), 0]$. Now we can apply \cite[Thm. 1.1]{liu-car2} to conclude. Namely, in \emph{loc. cit.}, we simply let $n=2$ and $r=2(p^{fd}+p-2)$. Clearly, the $s$ and $\mu$, and thus $G_s^{\mu}$ in \emph{loc. cit.} depend on $K, p, d$ only. We then simply let our $L$ to be the Galois closure of the fixed field of $G_s^{\mu}$.
\end{proof}

\bibliographystyle{amsalpha}

\end{document}